\definecolor{violet}{rgb}{0.6,0.4,0.8}
\theoremstyle{plain}
\newtheorem{thm}{Theorem}
\newtheorem{defn}{Definition}
\newtheorem{lemma}{Lemma}
\newtheorem{cor}{Corollary}
\theoremstyle{definition}
\theoremstyle{remark}
\numberwithin{equation}{section}
\renewcommand{\P}{\mathbb{P}}
\newcommand{\R}{\mathbb{R}}
\newcommand{\N}{\mathbb{N}}
\newcommand{\Z}{\mathbb{Z}}
\def\N{{\mathbb N}}
\def\Z{{\mathbb Z}}
\def\R{{\mathbb R}}
\def\P{{\mathbb P}}
\newcommand{\F}{{\mathcal F}}
\newcommand{\G}{{\mathcal G}}
\numberwithin{equation}{section}
\def\be{\begin{equation*}}
\def\ee{\end{equation*}}
\def\best{\begin{equation*}}
\def\eest{\end{equation*}}
\begin{document}

\title[Extremal inhomogeneous Gibbs states for spin 
models on trees]{Extremal inhomogeneous Gibbs states \\for SOS-models and finite-spin 
models on trees }

\author{Loren Coquille}
\address{Loren Coquille,
	Univ. Grenoble Alpes, CNRS, Institut Fourier, F-38000 Grenoble, France}
\email{loren.coquille@univ-grenoble-alpes.fr}

\author{Christof Külske}
\address{Christof Külske,
	Ruhr-Universit\"at Bochum, Fakult\"at f\"ur Mathematik, D44801 Bochum, Germany}
\email{Christof.Kuelske@ruhr-uni-bochum.de}

\author{Arnaud Le Ny}
\address{Arnaud Le Ny,
	LAMA UMR CNRS 8050, UPEC, Universit\'e Paris-Est, 61 Avenue du G\'en\'eral de Gaulle,  94010 Cr\'eteil cedex, France}
\email{arnaud.le-ny@u-pec.fr}

 \keywords{ Gibbs measures, models on trees, disordered systems, gradient 
 interactions, excess energy, cluster expansion, extremal states, cutsets.}
 \subjclass[2010]{60K35, 82B20, 82B26.}

\thanks{C.K. and A.L.N. thank Labex B\'ezout (ANR-10-LABX-58)
and Laboratory LAMA (UMR CNRS 8050) at Universit\'e Paris Est Cr\'eteil (UPEC) for 
the support of a visit of one month of C.K. which was crucial for the realization of this work. Research of ALN have also been supported by the CNRS IRP B\'ezout-Eurandom ``Random Graph, Statistical Mechanics and Networks''.}

\maketitle

\begin{abstract}
    We consider $\Z$-valued $p$-SOS-models {with nearest neighbor interactions of the form $|\omega_v-\omega_w|^p$}, 
    and {finite-spin ferromagnetic models} on regular trees. 
    This includes the classical SOS-model, the discrete Gaussian model and the Potts model. 
    
    {We exhibit a family of extremal inhomogeneous (i.e. tree automorphism non-invariant) Gibbs measures arising as low temperature perturbations of {{ground states}  (local energy minimizers),  which have a sparse enough set of broken bonds together with uniformly bounded increments along them}.
    These low temperature states in general do not possess any symmetries of the tree. }

    {This generalises the results of Gandolfo, Ruiz and Shlosman \cite{GRS12} about the Ising model, and shows that the latter behaviour is robust. 
    We treat three different types of extensions:
       non-compact state space gradient models,
       models without spin-symmetry, and
       models in small random fields.}

    
    We give a detailed construction  and 
    full proofs of the extremality of the low-temperature states 
    in the set of all Gibbs measures,  
    analysing excess energies relative to the ground states, 
    convergence of low-temperature expansions, and properties of cutsets.
\end{abstract}

{\small{
\tableofcontents
}}

\section{Introduction}	

{
Amongst probability measures on lattice spin systems, Gibbs states on trees have special properties,  widely studied for almost fifty years by now, see e.g. \cite{Spi75, Pres74, zach83, hig77, HOG, Lyons89}. 

{In the early nineties, Blekher and Ganikhodgaev \cite{BLG91}, following a strategy already proposed by Higuchi in the late seventies \cite{hig77}, show that the Ising model (in zero external field, on 
regular trees) possesses uncountably many interface states, which are extremal and non translation invariant, as soon as $T<T_c$.
In 2008, Rozikov and Rakhmatullaev \cite{RR08} exhibit non-translation invariant measures corresponding to subgroups in the group representation of the Cayley tree, the so-called "weakly periodic" Gibbs measures. 
These states can be thought of generalizations of Dobrushin states from \cite{Dob72,vB75}, but with many interfaces, possibly countably infinitely many.
}

{In 2012, Gandolfo, Ruiz and Shlosman \cite{GRS12}, exhibit a
rich family of extremal inhomogeneous (i.e. tree automorphism non-invariant) Gibbs measures arising
as low temperature perturbations of {{ground states}  (local energy minimizers),  which have a sparse enough set of broken bonds}, see also \cite{GRS15}}. 
 These low temperature states in general do not possess any symmetries of the tree. 

The aim of the present paper is {to show that the latter behaviour is robust. 
We prove it to hold in three different types of extensions of the Ising model:
\begin{itemize}
    \item non-compact state space gradient models
    \item models without spin-symmetry 
    \item models in small random fields 
\end{itemize}
}

The main objective  is to study integer-valued 
gradient models, but we also derive similar results for general finite-alphabet spin models of ferromagnetic type, including 
the Potts model. 

Gradient models belong to a very active field of research, also widely studied in the literature, either 
on lattices to model effective interfaces, see e.g. \cite{BK94, FS97, DGI00, Shef05, BKS07, vEK08, BS11, KL14, CK12, DHP21, LO21} or more specifically on trees \cite{HKLNR19, HK21, HK22, Roz13}.

{In \cite{HK21}, Henning and Kuelske prove, for very general classes 
of gradient interactions, assuming strong enough coupling, that there exist homogeneous Gibbs states which are strongly localised around one given height.}
The method of proof is analytic in character and based on finding fixed points of a suitable 
non-linear operator in a $l^p(\Z)$-sequence space of so-called boundary laws, 
starting from the description of Zachary \cite{zach83}, see also \cite{BeKiKu20, EnErIaKu12}. 
Strong coupling of the interactions allows to prove that the relevant operator is a contraction. 
A variant of that contraction method is used to prove the 
existence of a different type of consistent measures, namely 
\text{delocalized} gradient Gibbs measures (with unbounded height-fluctuations), still in strong coupling 
regimes. 
{Interestingly, coexistence of delocalized and localized states for the same interaction parameters is possible.}
Using dynamical systems ideas with an analysis of the unstable manifold 
around the free state, {special types of }inhomogenous gradient states are constructed, which still possess some rotation invariance, see \cite{HK22}. 

The present work first treats   general $p$-SOS models, {with nearest neighbor interactions, where the interaction
along} an edge $(v,w)$ of the tree is of the form $\beta |\omega_v-\omega_w|^p$, with a fixed interaction 
exponent $p\in (0,\infty)$, and sufficiently large inverse temperature $\beta\in (0,\infty)$. 
The cases $p=1$ (the classical SOS-model) and $p=2$ (discrete Gaussian, DGFF) are the most popular choices, 
see e.g. \cite{V06, BPR22-1, BPR22-2} and references therein. 
Our approach to inhomogeneous states is completely independent from the latter two results \cite{HK21,HK22}, 
based on the boundary law formalism. 
Instead of it, we rigorously develop low-temperature expansions around a suitable class of ground states {generalising the ones initially introduced in \cite{GRS12} in the case of the Ising model}. {These ground states do not possess any symmetries as rotation invariance in general.}

More precisely, in this enlarged framework, we generalise the definition of contours (or low temperature excitations), including increment sizes {(which were not needed for Ising)} and give a new proof of the control on the excess energy created by a low temperature excitation above a given non-homogeneous configuration. When this configuration has a sparse set of broken bonds, together with bounded increments, the excess energy control allows to conclude that it is a stable local ground state provided the degree of the tree is large enough.
We provide rigorous proofs of :
\begin{itemize}
	\item tightness and convergence of finite volume measures {with boundary conditions given by these non-hommogenous configurations}
	
	\item extremality of the low temperature states, derived from cluster expansion and cutset properties. 
	
	\item the stability of the excess-energy control under the addition of small local field terms, which provides the extension of our results to models in small random fields. 
\end{itemize}
Note that our contours have empty interior, a tree-specific property which provides more control in Peierls-type estimates and low-temperature expansions, also around inhomogeneous {ground states}. This allows to prove more refined results than on the lattice, where versions of Pirogov-Sinai theory would be necessary to treat situations without symmetry in spin space, even when there is spatial homogeneity, see e.g. Chapter 7 of  \cite{FV-book}. This particularity allows us to prove the stability of these inhomogeneous ground states, and existence of well-defined {infinite volume} limits with the required decorrelation properties, by combining statistical mechanics technics (as cluster expansions) with probabilistic methods (cutsets, Fourier transforms, etc.). \\
}

{The paper is organised as follows.}
In Section \ref{sec-results}, we {state our results on} the stability of some {ground states} at low temperature. We first treat general $p$-SOS models {(Theorem \ref{Theorem1})}, and afterwards come to general finite-alphabet models, including the Potts model {(Theorem \ref{Theorem2})}. 

In Section \ref{sec-excess-energy} we provide the definition of contours, as well as the proof of their excess energy estimate (Lemma \ref{Lem1}).

In Section \ref{sec-properties}, we consider these contours as polymers to perform cluster expansions within the framework of Bovier-Zahradn\'ik \cite{BZ00} and study the  low-temperature states. We use the estimates they provide in addition to the convergence of the expansions to prove Theorem \ref{Theorem1} and Theorem \ref{Theorem2}. We get an exponential control of the polymer weights (Proposition \ref{Prop3}), convergence of finite-dimensional marginals via Fourier transforms (Lemma \ref{Lem3}), quantitative tightness in the unbounded spins case (Section \ref{sec-tightness}), DLR-property of the limiting measures (Section \ref{sec-DLR}), identifiability of the different low-temperatures phases obtained from sparse {ground states}  (Section \ref{sec-DLR}). {Finally, {in Section \ref{sec-cutset}}, we derive cutset properties as well as  correlation decay for events of polymer type, that eventually lead to extremality.}

In Section \ref{Section5} we describe applications of our theory 
to existence of extremal states for inhomogeneous locally perturbed models. This includes the random field 
Potts model, and the $p$-SOS model in random fields 
and in random media.

\section{Definitions and main results} \label{sec-results}

{Let $\mathcal{T}^d=(V,E)$ denote the Cayley tree of order $d$, on which any vertex $i\in V$ has exactly $d+1$ neighbors. 
To any vertex $i\in V$, we attach a spin, which is a random variable $\sigma_i$ taking values in $\Omega_0$.
The spin space $\Omega_0$ we consider will be either the discrete set $\Z_q=\{0,\dots, q-1\}$, for $q \in \{2,3,\ldots\}$, or the unbounded countable set $\Z$,
equipped with a product $\sigma$-algebra $\mathcal{E}=\mathcal P(\Omega_0)$.
We are interested in probability measures on the product space $(\Omega,\mathcal F)=(\Omega_0^V, \mathcal{E}^{\otimes V})$.} 
For any subset $\Lambda\subset V$ we define $\omega_\Lambda=(\omega_v)_{v\in\Lambda}$.
For any subset $W\subset V$ we denote by $\mathcal{F}_W$ the 
sigma algebra generated by the variables $(\sigma_i)_{i\in W}$.
If $\Lambda \subset V$ is a finite subset, we write $\Lambda\Subset V$.

We introduce an interaction potential $\Phi$ and consider equilibrium states to be Gibbs measures built with the DLR framework, see e.g. \cite{HOG}: they are the probability measures $\mu$ consistent with the Gibbsian specification $\gamma^\Phi$ in the sense that a version of their conditional probabilities w.r.t. the outside of any finite set $\Lambda$ of the tree is given by the corresponding element of the Gibbs specification $\gamma_\Lambda^\Phi$, that is
$$
\forall \Lambda \Subset V,\; \forall \omega_\Lambda \in \Omega_\Lambda,\; \mu[\sigma_\Lambda=\omega_\Lambda \mid \mathcal{F}_{\Lambda^c}](\cdot) = \gamma_\Lambda^\Phi(\omega_\Lambda \mid \cdot), \; \mu-a.s.
$$

where the elements of the Gibbs specification $\gamma^\Phi=\gamma^\Phi(\beta)$ are the probability kernels $\gamma_\Lambda^\Phi$ from  $\Omega_{\Lambda^c}$  
to $\mathcal{F}_\Lambda$ 
defined for all finite $\Lambda$   as
$$
\gamma_\Lambda^\Phi(\omega_\Lambda \mid \tau_{\Lambda^c})=  \frac{1}{Z_\Lambda^\tau}e^{-\beta H_\Lambda(\omega_{\Lambda} \tau_{\Lambda^c})}.
$$

{The partition function $Z_\Lambda^\tau$ is the usual normalization constant for a fixed boundary condition $\tau$, at finite volume $\Lambda$, and the Hamiltonian $H_\Lambda^\Phi$ with boundary condition $\tau$ is there provided by
$
H_\Lambda(\omega_\Lambda  \tau_{\Lambda^c}) = \sum_{A \cap \Lambda \neq \emptyset} \Phi_A( \omega_\Lambda  \tau_{\Lambda^c})
$ where $\omega_\Lambda  \tau_{\Lambda^c}$ denotes the concatenation of $\omega_\Lambda$ and $\tau_{\Lambda^c}$. 
We sometimes shortly write $H$ for the Hamiltonian with free boundary conditions:
\begin{equation}\label{freeH}
H(\omega) = H_\Lambda^f(\omega):=\sum_{A \subset \Lambda} \Phi_A(\omega_\Lambda).
\end{equation}
The ferromagnetic potentials we consider are nearest-neighbor potentials and will be generically denoted by $\Phi$. Pairs $\{i,j\}\in E$ of nearest-neighbors are  written $i \sim j$.
In the case where $\Omega_0$ is unbounded, we consider $p$-SOS models, where the potential is given for any $p>0$ by
\begin{equation}\label{model-p-sos}
\Phi_{ij}(\omega) = | \omega_i - \omega_j | ^p\quad\text{for all}\quad i\sim j
\end{equation}
and 0 otherwise, where $\mid \cdot \mid$ is the absolute value. 
In the case where $\Omega_0=\Z_q$, we consider any nearest neighbor model of the form
\begin{equation}\label{model-finite-spin}
\Phi_{ij}(\omega) = \sum_{k,\ell=0}^{q-1} u_{k,\ell}\mathbf{1}_{\omega_i =k, \omega_j=\ell}
\text{ where } {\forall k,\ell\in\Z_q, u_{k,\ell}\geq0}\text{ and } u_{k,k}=0.
\end{equation}
The latter includes the $q$-state Potts model, for which
\begin{equation}\label{model-potts}
\Phi_{ij}(\omega) =  \mathbf{1}_{\omega_i \neq \omega_j}.
\end{equation}
}

{For a given set of edges $D \subset E$, 
and a given vertex $v\in V$ we define $d_D(v)$ to be the number of bonds in $D$ which are incident to $v$. Then, we define the number :
$$d_D=\max_{v\in V} d_D(v).$$}

\subsection{{p}-SOS models}

Existence of phase transitions on trees with homogeneous phases holds for very general interactions at low-temperature \cite{HK21,HK22}. However, we investigate here the 
low-temperature stability of non translation-invariant (inhomogeneous) ground states, defined as follows below (see an example in Figure \ref{fig-GS}), {and show how they are related to infinite-volume Gibbs measures.}
{Our first main result reads then: }

\begin{thm}\label{Theorem1} {Let $p>0$ and consider the $p$-SOS models (\ref{model-p-sos}) {on the Cayley tree of degree $d$}.} {Let $d_{\max},M\in \N_0$, where $d_{\max}\leq d$.} Define $\G^0=\G^0({d_{\max}},{d},M)$ to be the set of configurations $\omega^0\in \Z^V$ which satisfy the following
{sparsity requirement on the set of broken edges and have uniformly bounded spin increments along them:}
\begin{enumerate}
\item\label{sparsity} The set of broken edges 
$D:=\{\{v,w\}\in E:  \omega^0_v\neq\omega^0_w\}$ {is such that $d_D\leq d_{\max}$.}
\item\label{bdd-incr}  All increments are uniformly bounded by $M$: 
$\max_{v\sim w}|\omega^0_v-\omega^0_w|\leq M$.
\end{enumerate}

Then, for each interaction exponent $p>0$, each maximal increment size
$M\in \N_0$, and each {maximal internal degree $d_{\max}\in \N_0$} there is a minimal degree 
$d(p,M,{d_{\max}})$ such that for all degrees $d\geq d(p,M,{d_{\max}})$ the following holds: 

\smallskip
There exists a finite $\beta_0=\beta_0(d,p,{d_{\max}},M)$ such that 
for all $\beta\geq \beta_0$ for  
the $p$-SOS model on the regular tree of degree $d$, 
there is a family of Gibbs measures $(\mu^{\omega^0}_{\beta})_{\omega^0\in \G^0}$ with the properties
\begin{enumerate}
\item $\omega^0\neq \tau^0$ implies $\mu^{\omega^0}_{\beta}\neq \mu^{\tau^0}_{\beta}$.
\item  The measures $\mu^{\omega^0}_{\beta}$ are extremal in the set of all Gibbs measures.
\item  $\mu^{\omega^0}_{\beta}$ concentrates around  $\omega^0$ in the sense that 
{there exist two positive constants $c,C$ such that} 
for any $v\in V$ and
for all increments $k\in \Z$, 
\begin{equation}
\begin{split}
&\mu^{\omega^0}_\beta(\sigma_v-\omega_v^0=k)\leq C e^{-c \beta |k|^p }.
\end{split}
\end{equation}
\end{enumerate}
\end{thm}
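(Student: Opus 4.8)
The plan is to run a low-temperature polymer (cluster) expansion around each fixed ground state $\omega^0\in\G^0$ separately, and to read off all three conclusions from the convergent expansion. The tree setting is what makes this feasible without Pirogov--Sinai theory: the contours will have empty interior, so there is no competition between distinct ground states inside a contour, and one may expand directly around the single reference configuration $\omega^0$ even though the model has no spin symmetry and $\omega^0$ no tree symmetry.

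First I would fix $\omega^0$ and, on a finite volume $\Lambda$ carrying the boundary condition $\omega^0$ outside, define the low-temperature excitations of a configuration $\omega$: the contours are the connected clusters of edges on which $\omega$ fails to agree with $\omega^0$, decorated by the increments $k_v=\omega_v-\omega^0_v$ they carry. The decisive input is the excess-energy estimate, namely that the energy of such an excitation above $\omega^0$ is bounded below by a positive multiple of a weighted contour length $\ell(\gamma)$, each excited edge carrying an increment of size $|k|$ contributing at least of order $|k|^p$. Both hypotheses defining $\G^0$ enter here: the uniform bound $M$ keeps the increments of $\omega^0$ itself, and hence the comparison of perturbed and unperturbed edge energies, under control, while the sparsity $d_D\le d_{\max}$ guarantees that every vertex is incident to at most $d_{\max}$ broken bonds among its $d+1$ edges. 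Taking $d$ large (this is the role of the threshold $d(p,M,d_{\max})$) then forces the many \emph{satisfied} edges at each vertex to dominate, so that the excess energy of any local modification is strictly positive and $\omega^0$ is a genuine stable local ground state.

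With this Peierls-type bound I would treat contours as polymers with activities of order $e^{-c\beta\,\ell(\gamma)}$, verify the Koteck\'y--Preiss / Bovier--Zahradn\'ik convergence criterion \cite{BZ00} for $\beta\ge\beta_0(d,p,d_{\max},M)$ large (here $\beta_0$ is allowed to grow with $d$ precisely so that the contour weights beat the entropy, which on a degree-$d$ tree grows geometrically in the contour size), and obtain a convergent expansion for the constrained partition functions. From it I would extract exponential control of the polymer weights, convergence of all finite-dimensional marginals (most conveniently through characteristic functions, since $\Omega_0=\Z$ is non-compact and one additionally needs the quantitative tightness ruling out escape of mass to infinity), and the DLR property of the limit $\mu^{\omega^0}_\beta$. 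Conclusion (3) is then immediate: the event $\{\sigma_v-\omega^0_v=k\}$ forces a contour through $v$ carrying increment $k$, whose total weight the excess-energy bound controls by $C e^{-c\beta|k|^p}$; and conclusion (1) follows since if $\omega^0\ne\tau^0$ they disagree at some vertex, where (3) pins the two measures near different values.

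The remaining conclusion, extremality (2), does not follow from concentration alone, and I would obtain it from a tail-triviality argument tailored to the tree. Any polymer-type event is measurable with respect to the contours meeting a finite region, and for such an event to be correlated with the configuration far away a contour must cross a cutset separating the region from infinity; the convergent expansion yields exponential decay of correlations across cutsets, and the tree supplies cutsets of controlled size at every scale, so the influence of the far boundary vanishes and the tail $\sigma$-algebra is trivial. I expect the main obstacle to lie one step earlier, in establishing the excess-energy estimate uniformly over $\G^0$ and over \emph{all} increment sizes with the correct $|k|^p$ growth: this is the genuinely new point compared with the Ising case of \cite{GRS12}, where increments are automatically bounded by one, and everything downstream, including the sharp exponent in (3) and the cutset decorrelation, rests on it.
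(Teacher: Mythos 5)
Your strategy is correct and coincides essentially step for step with the paper's own proof: a labelled-contour expansion around the fixed $\omega^0$ with the excess-energy bound of Lemma \ref{Lem1} as the key input, convergence via the Bovier--Zahradn\'ik criterion, finite-dimensional convergence through characteristic functions plus quantitative tightness, and extremality from cutsets combined with decay of polymer-type correlations. You also correctly single out the uniform $|k|^p$ excess-energy estimate over unbounded increments as the genuinely new technical point relative to \cite{GRS12}.
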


{\bf Remark.} 
Further properties will be derived in the explicit construction, see below. Moreover, the assumptions \ref{sparsity} and \ref{bdd-incr} of Theorem \ref{Theorem1} can be replaced by the more general assumption 
\eqref{generalsparsity} below which mixes geometric sparsity and boundedness of heights.

\subsection{Finite-spin ferromagnetic models}

We have an analogous theorem in the situation of finite-alphabet models \eqref{model-finite-spin}
with generalized ferromagnetic interactions $\Phi \geq 0$ in the following sense.  

\begin{thm}\label{Theorem2}{Let $q\in\N_0$ and consider the $q$-spin model (\ref{model-finite-spin}) {on the Cayley tree of degree $d$}. {Let $d_{\max}\in \N_0$, with $d_{\max}\leq d$.}
Put $u:={\min}_{k\neq \ell}u_{k,\ell}$ and $U:=\max_{k,\ell}u_{k,\ell}$.}
Define $\G_q^0=\G_q^0({d_{\max},{d}})$ to be the set of configurations 
$\omega^0\in \Z^V$ whose set of broken bonds 
$D=\{\{v,w\}\in E:\omega^0_v\neq\omega^0_w\}$ {is such that $d_D\leq d_{\max}$}.  

Then, under the following geometric sparsity condition on the set of broken bonds
\begin{equation}\label{sparsity-qspin}
(d-1)u>d_{\max}( U+u)
\end{equation}
there exists a finite $\beta_0=\beta_0(d,q,u,U,{d_{\max}}) \; {>0}$ such that 
for all $\beta\geq \beta_0$ 
there is a family of Gibbs measures $(\mu^{\omega^0}_{\beta})_{\omega^0\in \G^0}$ with the following properties:
\begin{enumerate}
\item $\omega^0\neq \tau^0$ implies $\mu^{\omega^0}_{\beta}\neq \mu^{\tau^0}_{\beta}$.
\item  The measures $\mu^{\omega^0}_{\beta}$ are extremal in the set of all Gibbs measures.
\item  $\mu^{\omega^0}_{\beta}$ concentrates around  $\omega^0$ in the sense that {there exist two positive constants $c,C$ such that} for any vertex $v \in V$,  
\begin{equation}
\begin{split}
&\mu^{\omega^0}_{\beta}(\sigma_v=\omega_v^0)\geq 1-C e^{-c \beta }.
\end{split}
\end{equation}

\end{enumerate}
\end{thm}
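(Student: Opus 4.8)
The plan is to obtain $\mu^{\omega^0}_\beta$ as the infinite-volume limit of the finite-volume Gibbs measures $\gamma_\Lambda^\Phi(\cdot\mid\omega^0)$ along $\Lambda\Subset V$ with $\Lambda\uparrow V$, and to control that limit through a convergent low-temperature expansion around $\omega^0$. Fix $\omega^0\in\G_q^0$ with broken-bond set $D$, $d_D\le d_{\max}$. For a configuration $\omega$ agreeing with $\omega^0$ off a finite set, put $A=A(\omega)=\{v\in V:\omega_v\neq\omega^0_v\}$ and decompose $A$ into its tree-connected components; each component, decorated with the values $\omega$ takes on it, is a contour $\gamma$. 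On a tree these contours have empty interior, the excess energy is exactly additive over them, and they interact only by hard-core exclusion, so they form an admissible polymer system for the Bovier-Zahradn\'ik expansion \cite{BZ00}.

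The crux is the excess-energy estimate for a single contour, and this is exactly where the sparsity condition \eqref{sparsity-qspin} enters. Let $A$ be connected with $|A|=n$; being a finite subtree it has $n-1$ internal edges and exactly $(d-1)n+2$ boundary edges. Each boundary edge outside $D$ was aligned in $\omega^0$ and is broken in $\omega$, hence raises the energy by at least $u$, while each edge of $D$ incident to $A$ can lower the energy by at most $U$. Estimating all boundary edges as if they contributed $u$ and correcting by at most $U+u$ for each of the at most $d_{\max}n$ edges of $D$ touching $A$ (using $\sum_{v\in A}d_D(v)\le d_{\max}n$), I obtain
\[
H(\omega)-H(\omega^0)\ \ge\ \big[(d-1)u-d_{\max}(U+u)\big]\,n\ =:\ c_0\,n ,
\]
and $c_0>0$ precisely under \eqref{sparsity-qspin}. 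This is the finite-spin instance of Lemma \ref{Lem1}. I expect this to be the main obstacle: it is the step that must absorb the complete absence of spin- and tree-symmetry of $\omega^0$, whereas everything downstream is comparatively mechanical.

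With $c_0>0$ in hand, a contour of size $n$ has statistical weight at most $e^{-\beta c_0 n}$, while the number of size-$n$ subtrees through a fixed vertex is at most $K^n$ and each of the $n$ occupied sites carries at most $q-1$ admissible labels. Hence for $\beta$ large the Koteck\'y-Preiss convergence criterion holds, yielding a uniformly convergent cluster expansion, convergence of all finite-volume marginals, and a limiting Gibbs measure $\mu^{\omega^0}_\beta$ satisfying the DLR equations; this is the content of Proposition \ref{Prop3} and Lemma \ref{Lem3}. Property (3) is then immediate: $\{\sigma_v\neq\omega^0_v\}$ forces $v$ to be covered by some contour, and summing the weights of contours through $v$ gives $\mu^{\omega^0}_\beta(\sigma_v\neq\omega^0_v)\le Ce^{-c\beta}$. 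Property (1) follows: if $\omega^0_v\neq\tau^0_v$ at some $v$, then for $\beta$ large both measures place mass $>\tfrac12$ on their own value at $v$, so $\mu^{\omega^0}_\beta\neq\mu^{\tau^0}_\beta$.

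For the extremality (2), I would first read off exponential decay of truncated correlations of polymer-type events from the convergent expansion, and then upgrade it to triviality of the tail $\sigma$-field by a cutset argument. Any tail event is measurable with respect to the spins beyond the cutset $\Pi_n$ at graph distance $n$ from a fixed vertex $o$, and the influence of that outer field on a local observable at $o$ is controlled by the total weight of contours joining the neighbourhood of $o$ to $\Pi_n$. Such a contour has at least $n$ sites, so this weight is at most $(Ke^{-c\beta})^n$, which tends to $0$ as $n\to\infty$ once $\beta$ is large enough that $Ke^{-c\beta}<1$. Thus the tail is $\mu^{\omega^0}_\beta$-trivial and the measure is extremal. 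This cutset step is the second genuinely non-routine point, being where the tree geometry of unique connecting paths and well-controlled cutsets is used to turn correlation decay into tail triviality.
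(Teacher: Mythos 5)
Your proposal is correct and follows essentially the same route as the paper: decorated connected components of incorrect points as hard-core polymers, the excess-energy bound $H(\omega)-H(\omega^0)\ge[(d-1)u-d_{\max}(U+u)]\,|\gamma|$ (which the paper derives in Lemma \ref{excess2} by a parent--child telescoping rather than your direct count of the $(d-1)n+2$ boundary edges, but with the identical conclusion), a convergent cluster expansion (the paper uses the Bovier--Zahradn\'ik criterion where you invoke Koteck\'y--Preiss), and extremality obtained by upgrading polymer-type correlation decay to tail triviality via cutsets of sites where $\sigma_v=\omega^0_v$. The only caveat is that your extremality sketch should make explicit that the cutsets are random barriers of correct points in annuli (existing with high probability by the exponential tail on contour sizes), used to reduce arbitrary tail and cylinder events to polymer-type events before the correlation-decay estimate applies, exactly as in the paper's Lemmas \ref{Lem4}--\ref{Lem5} and Proposition \ref{Prop4}.
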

{\bf Remark:} {In the case of the Potts model (\ref{model-potts}), we have $u=U=1$ }and thus the sparsity assumption \eqref{sparsity-qspin} on $d_{\max}$ becomes 
$2 d_{\max}<d-1$ which coincides with the sparsity requirement for the Ising model 
given in \cite{GRS12}. See Figure \ref{fig-GS} for an example.


{\color{black}

\section{Excess energy for sparse ground states}\label{sec-excess-energy}
In this section, we derive useful lower bounds on {excess energies} in our models, which are the starting point of the low-temperature expansions and extensions. 
Similar estimates were obtained for the Ising model in \cite{GRS12} using 
induction over the size of the contours. Here we follow a different non-inductive 
approach which provides {useful bounds in the case of unbounded spins.} 

\smallskip

Let us start with the introduction of contours as labelled contours, namely as 
pairs of supports $\gamma$ 
and spin configurations $\omega_{\gamma}$ on these supports.

\begin{defn}Let $\omega^0\in \Omega_0^V$ be a fixed reference 
configuration. 
A contour for the general spin configuration $\omega\in \Omega_0^V$ relative to $\omega^0$
is a pair $\bar \gamma=(\gamma, \omega_{\gamma})$ where the support {
$\gamma=\{v\in V: \omega_v\neq \omega_v^0\}$ is 
a connected component of the set of incorrect points for 
$\omega$ (with respect to $\omega^0$), and {$\omega_\gamma=(\omega_v)_{v\in\gamma}$}.}
See Figure \ref{fig-contour}.
\end{defn}

The contour definition 
above generalizes the one of \cite{GRS12} for the Ising model, in the 
sense that it {also encodes the} spin configuration on the support. This definition facilitates to relate 
probabilities of the occurrence of given local patterns to suitable contour sums. 
Due to their tree-nature, our contours always have no interior components {of their complement}, which allows to avoid symmetry {requirements} or spin-flip considerations in applying Peierls-type arguments {and 
expansions}. 

Note moreover that for each $\omega$ for which $\bar \gamma$ {is a contour} 
we must have $\omega_{\partial \gamma}=\omega^0_{\partial \gamma}$, i.e.  
the spin configuration must take the values of the ground state in the outer boundary 
of the contour support.

\begin{figure}
  \begin{center}
  \includegraphics[width=7cm]{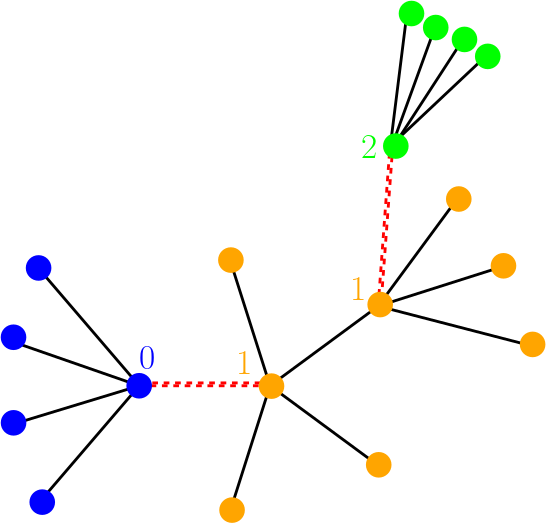}\quad
  \includegraphics[width=7cm]{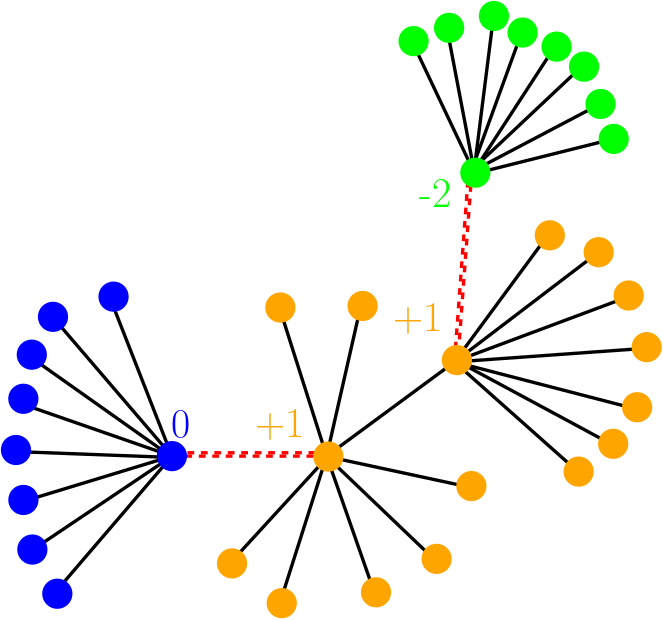}
  \caption{ {Ground state configurations with $d_{\max}=1$ (same colors denote same spin values, dashed lines denote broken bonds): 
  (Left) for the 3-Potts model. The above configurations are ground states for $d\geq4$ (see the proof of Corollary \ref{cor-finite-spin}).
  (Right) for the SOS model, with $p=1,M=3$. The above configuration is a ground state for $d\geq8$ (see the proof of Corollary \ref{cor}). }}\label{fig-GS}
  \end{center}
\end{figure}

\begin{figure}
  \begin{center}
  \includegraphics[width=7cm]{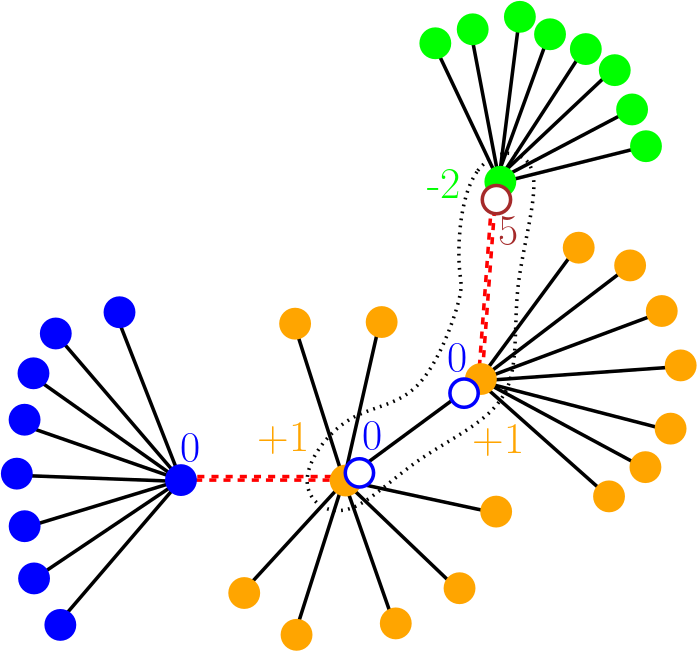}
  \caption{A contour $\bar\gamma=(\gamma,\omega_\gamma)$ relative to the ground state  $\omega^0$ of Figure \ref{fig-GS}. The configuration $\omega^0$ is depicted by full circles. The support $\gamma$ consists of 3 sites surrounded by a dotted line. The configuration $\omega_\gamma$ is depicted by open circles.}\label{fig-contour}
  \end{center}
\end{figure}

\subsection{Stable inhomogeneous ground states for the {p}-SOS models}


Consider the homogeneous $p$-SOS-models defined in {\eqref{model-p-sos}}, 
for $d\geq 2$. 

\smallskip

{ 
{
\begin{defn}
  We say that a configuration $\omega^0\in \Omega$ is stable with 
stability constant $c>0$ if for all configurations $\omega\in \Omega$ differing from $\omega^0$ at finitely many sites, the excess energy relative to $\omega^0$
satisfies the lower bound 
\begin{equation}
\begin{split}
\label{XSpSOS}
	&H(\omega)- H(\omega^{0}) \geq c\sum_{v}|\omega_v-\omega^0_v|^p.
\end{split}
\end{equation}
\end{defn}
}
In particular, all stable configurations are ground states in the usual sense that finite volume perturbations raise the energy.

This notion will allow to perform the large-$\beta$ expansions around stable $\omega^0$ 
of Section \ref{sec-properties}, as we will see. 
To formulate the lemma on the excess energy and develop a viable criterion 
on the type of ground states which are stable, consider a pair of configurations $(\omega,\omega^0)$ 
which differ on a contour $\bar\gamma$. }The following notations are useful. 
Describe the geometric part $\gamma$ 
of a contour to be a finite subtree rooted at the origin $0$. We think of it
embedded into the full tree which we describe as 
rooted tree which has $d+1$ offspring at the origin, but offspring $d$ 
at all other sites. By homogeneity of the tree and of the potential, there is no loss in doing so. 
We write $w \leftarrow v$ if 
$w$ is a child (or offspring) of $v$ on the full tree relative to the chosen root. 
We write $k_v$ for the number of children of $v$ {in} the contour $\gamma$. 
For $v\neq 0$ we have  $k_v\in \{0,1,\dots, d\} $, while $k_0\in \{0,\dots,d+1\}$. 

\smallskip

Now we present the lemma on the excess energy of a spin configuration $\omega$ 
relative to a general ground state $\omega^0\in \Z^V$. 

\begin{lemma}\label{Lem1}
{Let $p\in (0,\infty)$, $\omega^0\in \Z^V$, and $\omega=\omega_\gamma\omega^0_{\gamma^c}$ such that $\bar\gamma=(\gamma, \omega_\gamma)$ is a contour with respect to the fixed configuration $\omega^0$.}

Then the excess energy satisfies
	\begin{equation}
\begin{split}
\label{XSpSOS2}
	&H(\omega)- H(\omega^{0}) 
	=\sum_{v \in \gamma, w\leftarrow v}\Bigl( |\omega_v-\omega_w|^p -|\omega^0_v-\omega^0_w|^p \Bigr)\cr
&	\geq (d c_p^2 -1)\sum_{v\in \gamma}|\omega_v-\omega^0_v|^p
	-(c_p +1)\sum_{v \in \gamma, w\leftarrow v}|\omega^0_v-\omega^0_w|^p
\end{split}
\end{equation}
where $c_p=\min\{2^{1-p},1\}$. 
\end{lemma}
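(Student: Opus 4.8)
The equality in \eqref{XSpSOS2} is immediate: since $\omega=\omega_\gamma\omega^0_{\gamma^c}$ differs from $\omega^0$ only on the support $\gamma$, the only bonds whose energy changes are those incident to $\gamma$. Organizing the tree as a rooted tree, every such bond is of the form $\{v,w\}$ with $v\in\gamma$ and $w\leftarrow v$ (a bond from a contour site to one of its children, where $w$ may or may not lie in $\gamma$), and the bond to the parent of the contour's root is accounted for by the root's extra offspring. Thus $H(\omega)-H(\omega^0)=\sum_{v\in\gamma,\,w\leftarrow v}\bigl(|\omega_v-\omega_w|^p-|\omega^0_v-\omega^0_w|^p\bigr)$, using that $\omega_{\partial\gamma}=\omega^0_{\partial\gamma}$ on the boundary.

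The substance is the lower bound. The plan is to bound each term $|\omega_v-\omega_w|^p$ from below on a per-bond basis. First I would record the elementary quasi-triangle inequality for the $p$-th power: writing $a_v:=|\omega_v-\omega^0_v|$ for the deviation at $v$, I want an inequality of the form $|\omega_v-\omega_w|^p\geq c_p\,(a_v^p+a_w^p)-|\omega^0_v-\omega^0_w|^p$ type, separating the genuine deviations from the ground-state increment. Concretely, $|\omega_v-\omega_w|\geq \bigl||\omega_v-\omega^0_v|-|\omega_w-\omega^0_w|\bigr|$ is false in general, so instead I would use $\omega_v-\omega_w=(\omega_v-\omega^0_v)-(\omega_w-\omega^0_w)+(\omega^0_v-\omega^0_w)$ and the superadditivity/concavity behaviour of $t\mapsto t^p$. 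The constant $c_p=\min\{2^{1-p},1\}$ is exactly the constant in the inequality $|x+y|^p\geq c_p(|x|^p+|y|^p)$ valid for the relevant range (for $p\geq1$ one has $|x-y|^p\geq 2^{1-p}(|x|^p+|y|^p)-\dots$, while for $p\leq1$ the power is subadditive and $c_p=1$ works), so identifying precisely which elementary $p$-power inequality yields this $c_p$ is the technical heart.

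Once the per-bond estimate is in hand, I would sum over all bonds $\{v,w\}$ with $v\in\gamma,\,w\leftarrow v$. The key combinatorial point is that each contour site $v$ appears once as a child of its parent and $k_v$ times as a parent of its children; after summing, each deviation $a_v^p$ should accrue a coefficient close to $d\,c_p^2$ — one factor of $c_p$ from the bond above $v$ and another from the $d$ bonds below (using that non-root sites have $d$ children in the full tree), with the squared constant reflecting that the deviation must be ``paid for'' from both directions of the quasi-triangle inequality. The $-1$ in $(dc_p^2-1)$ absorbs the single subtracted self-term, and the residual ground-state increments collect into the term $-(c_p+1)\sum_{v,w\leftarrow v}|\omega^0_v-\omega^0_w|^p$.

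The main obstacle I anticipate is the bookkeeping: making sure every bond incident to $\gamma$ is counted exactly once and that the telescoping of $c_p$-factors genuinely produces $dc_p^2$ rather than $dc_p$, which forces a careful two-sided application of the elementary inequality (one application to isolate $a_v$ from the neighbor, a second to re-separate the neighbor's deviation from the ground-state increment). I would handle the boundary bonds and the root's extra offspring explicitly so that the coefficient $d$ (not $d+1$) emerges uniformly, justified by the homogeneity reduction already set up before the lemma. The final step is purely to collect terms and read off the stated bound.
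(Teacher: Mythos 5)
Your overall architecture is the same as the paper's: establish the equality by noting that only bonds incident to $\gamma$ change energy, prove a per-bond lower bound whose constant is $c_p$, and then count coefficients over the rooted tree. However, there is a genuine gap at what you yourself call the technical heart. The elementary inequality you propose, $|x+y|^p\geq c_p(|x|^p+|y|^p)$, is false for every $p$ (take $y=-x$: the left side is $0$). The inequality that actually produces $c_p=\min\{2^{1-p},1\}$ has a \emph{minus} sign on one of the two terms:
\begin{equation*}
\inf_{s\in\R}\bigl(|s+t|^p+|s|^p\bigr)=c_p|t|^p,\qquad\text{i.e.}\qquad |s+t|^p\;\geq\;c_p|t|^p-|s|^p ,
\end{equation*}
which follows by scaling to $t=1$ and checking that the infimum is attained at $s=0$ (for $p\leq 1$, by subadditivity) or at $s=-1/2$ (for $p\geq 1$, by convexity). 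Without this precise form the per-bond estimate cannot be assembled, so the proof as proposed does not close.

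The second issue is your accounting of where $c_p^2$ comes from. You suggest at one point that one factor of $c_p$ is contributed by the bond above $v$ and another by the $d$ bonds below; that is not how the coefficient arises and would not survive the bookkeeping. In the paper's argument both factors of $c_p$ come from \emph{the same} bond below $v$, by applying the displayed inequality twice in succession to that single bond: writing $s_v=\omega_v-\omega^0_v$, one first peels off the child's deviation, $|\omega_v-\omega_w|^p=|s_v-s_w+(\omega^0_v-\omega^0_w)|^p\geq c_p|s_v+(\omega^0_v-\omega^0_w)|^p-|s_w|^p$, and then peels off the ground-state increment, obtaining $\geq c_p^2|s_v|^p-c_p|\omega^0_v-\omega^0_w|^p-|s_w|^p$. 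Summing over $v\in\gamma$ and all children $w\leftarrow v$ in the full tree, each non-root $v\in\gamma$ collects $d\,c_p^2|s_v|^p$ from its $d$ downward bonds and a single $-|s_v|^p$ from the one bond in which it appears as the child $w$ (this is the $-1$ in $dc_p^2-1$); the root collects $(d+1)c_p^2|s_0|^p$ with no subtraction, which is at least $(dc_p^2-1)|s_0|^p$. The increments $|\omega^0_v-\omega^0_w|^p$ then carry the total coefficient $c_p+1$ (one $c_p$ from the peeling, one $1$ from the subtracted ground-state energy in the equality), giving the stated bound. With the corrected inequality and this per-bond double application, your plan becomes exactly the paper's proof.
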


\begin{cor}\label{cor} For each interaction exponent $p>0$, each maximal increment size
$M\in \N_0$, and each maximal {internal} degree 
${d_{\max}}\in \N_0$ there is a minimal degree 
$d(p,M,{d_{\max}})$ such that for all $d\geq d(p,M,{d_{\max}})$
all configurations {$\omega^0\in \G^0({d_{\max}},M)$} are stable ground states.
\end{cor}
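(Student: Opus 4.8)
The plan is to read off the stability bound \eqref{XSpSOS} directly from the excess-energy estimate of Lemma \ref{Lem1}, after reducing to a single contour and then absorbing the boundary term using the two defining properties of $\G^0$. First I would reduce the general statement to the single-contour case. Any configuration $\omega$ differing from $\omega^0$ at finitely many sites has an incorrect set $\{v:\omega_v\neq\omega^0_v\}$ which decomposes into finitely many connected components $\gamma_1,\dots,\gamma_n$, each the support of a contour. Since distinct components are pairwise non-adjacent and each contour matches $\omega^0$ on its outer boundary, every edge whose energy is affected has both endpoints in a single component; hence the excess energy is additive, $H(\omega)-H(\omega^0)=\sum_i\bigl(H(\omega^{(i)})-H(\omega^0)\bigr)$, where $\omega^{(i)}$ equals $\omega$ on $\gamma_i$ and $\omega^0$ elsewhere. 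It therefore suffices to establish \eqref{XSpSOS} with the sum restricted to one contour support and then to sum over $i$.

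For a single contour $\bar\gamma=(\gamma,\omega_\gamma)$, I would apply the lower bound of Lemma \ref{Lem1}. The favourable term carries the factor $(dc_p^2-1)\sum_{v\in\gamma}|\omega_v-\omega^0_v|^p$, while the competing term $(c_p+1)\sum_{v\in\gamma,\,w\leftarrow v}|\omega^0_v-\omega^0_w|^p$ has to be controlled. Here the structure of $\G^0$ enters decisively: a summand is nonzero only on a broken edge $\{v,w\}\in D$, on which the bounded-increment assumption \ref{bdd-incr} gives $|\omega^0_v-\omega^0_w|^p\leq M^p$, while the sparsity assumption $d_D\leq d_{\max}$ bounds the number of broken child-edges at each $v$ by $d_{\max}$. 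This yields the uniform estimate $\sum_{v\in\gamma,\,w\leftarrow v}|\omega^0_v-\omega^0_w|^p\leq d_{\max}M^p\,|\gamma|$.

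The final ingredient is the elementary integer-spin remark that each $v\in\gamma$ is incorrect, so $|\omega_v-\omega^0_v|\geq 1$ and hence $|\gamma|\leq\sum_{v\in\gamma}|\omega_v-\omega^0_v|^p$. Combining the three estimates gives, on each contour,
\[
H(\omega)-H(\omega^0)\geq\bigl((dc_p^2-1)-(c_p+1)d_{\max}M^p\bigr)\sum_{v\in\gamma}|\omega_v-\omega^0_v|^p .
\]
It then remains to define $d(p,M,d_{\max})$ as the smallest integer with $d>\bigl(1+(c_p+1)d_{\max}M^p\bigr)/c_p^2$; for all $d\geq d(p,M,d_{\max})$ the bracket is strictly positive and serves as an admissible stability constant $c>0$. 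Summing the single-contour bound over $\gamma_1,\dots,\gamma_n$ restores \eqref{XSpSOS} for arbitrary finite perturbations, whence every $\omega^0\in\G^0(d_{\max},M)$ is a stable ground state.

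I do not anticipate a genuinely hard step, since Lemma \ref{Lem1} performs the analytic work. The only points requiring care are the reduction to a single contour---verifying additivity of the excess energy over the pairwise non-adjacent connected components---and the bookkeeping that the boundary term of Lemma \ref{Lem1} only ever weights broken edges. This last observation is exactly what allows the geometric parameter $d_{\max}$ and the increment bound $M$ to dominate that term uniformly by a multiple of $|\gamma|$, which is in turn controlled by $\sum_{v\in\gamma}|\omega_v-\omega^0_v|^p$.
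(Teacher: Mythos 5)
Your proof is correct and follows essentially the same route as the paper: apply Lemma \ref{Lem1}, bound the boundary term by $d_{\max}M^p|\gamma|\leq d_{\max}M^p\sum_{v\in\gamma}|\omega_v-\omega^0_v|^p$ using the sparsity and bounded-increment assumptions together with $|\omega_v-\omega^0_v|\geq 1$ on $\gamma$, and choose $d$ so that the resulting constant is positive. The paper states this in two lines for a single contour and leaves implicit the reduction you spell out (additivity of the excess energy over the pairwise non-adjacent contour supports), so your write-up is simply a more careful version of the same argument.
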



\begin{proof}[Proof of Corollary \ref{cor}] 
  By Lemma \ref{Lem1}, 
  \begin{equation}
    \begin{split}
      &H(\omega)- H(\omega^{0}) 
      \geq (d c_p^2 -1-(c_p +1) d_{\max} M^p)\sum_{v\in \gamma}|\omega_v-\omega^0_v|^p.
    \end{split}
    \end{equation}

    Thus by taking
$d(p,M,{d_{\max}})= 1+ \lfloor c_p^{-2} + (c_p^{-1}+
c_p^{-2}){d_{\max}}  M^p \rfloor,$
  this  implies stability with the constant $c:=d c_p^2 -1-(c_p +1) d_{\max} M^p>0$.
\end{proof}

\smallskip

{\bf Remark. } We may also work with a {more general} mixed sparsity requirement 
on $\omega^0$ of the form 
	\begin{equation}
\begin{split}
\label{generalsparsity}
	&(d c_p^2 -1)
	>(c_p +1)\sup_{v}\sum_{ w\sim  v}|\omega^0_v-\omega^0_w|^p
\end{split}
\end{equation}
{which follows from the right hand side of \eqref{XSpSOS2}, and which will provide good bounds to ensure convergence of the cluster expansion, when multiplied with a sufficiently large $\beta$}, see below. 

\smallskip

{\bf Remark. }  {In the particular case of} ground state increments bounded in modulus  by $M=1$, 
and sparse set of broken bonds with $d_D=1$, we get the minimial degree of 
$d(p,1,1)=4$, as for the Ising model in \cite{GRS12}, for all $p\leq 1$. 
For the discrete Gaussian {(2-SOS model)} we get  $d(2,1,1)=11$.

\proof
Let us write 
$$ 
s_v:=\omega_v-\omega^0_v
$$
for the deviation of the spin from the ground state. As $\gamma$ is the support 
of the contour we have $s_v\neq 0$ 
for $v\in \gamma$ and $s_w=0$ for $w\not\in \gamma$. 
Note that 
\begin{equation}
\begin{split}\label{leavelower}
\inf_{s\in \R}(|s+t|^p+|s|^p)=c_p |t|^p
\end{split}
\end{equation}
where, by scaling $c_p=\inf_{s\in \R}(|s+1|^p+|s|^p)$.

{For the standard SOS-model with 
$p=1$ this becomes the triangle inequality. }
 The infimum 
{in the formula of $c_p$ is achieved} either at $0$ or $-1/2$ and hence 
$c_p=\min\{2^{1-p},1\}$. 
Using this twice we obtain 
\begin{equation}
\begin{split}\label{leavelower2}
|\omega_v-\omega_w|^p &=|s_v-s_w +(\omega^0_v-\omega^0_w)|^p \cr
&\geq c_p |s_v+ (\omega^0_v-\omega^0_w)|^p -|s_w|^p \cr
&\geq c^2_p |s_v|^p- c_p |\omega^0_v-\omega^0_w|^p -|s_w|^p. \cr
\end{split}
\end{equation}
We have 
\begin{equation}
\begin{split}
\label{XSppSOS}
	&\sum_{v \in \gamma, 
	w\leftarrow v}\Bigl( c_p^2 |s_v|^p-|s_w|^p \Bigr)\cr
&= (d+1)c_p^2 |s_0|^p+ (d c_p^2-1)
\sum_{v\in \gamma\backslash 0}|s_v|^p \geq (d c_p^2 -1)
\sum_{v\in \gamma}|s_v|^p.
\end{split}
\end{equation}
From this the claim follows. 
\endproof

 \subsection{{Stable inhomogeneous ground states for finite-state 
 models}}

 In the case of finite-state ferromagnetic models defined in \eqref{model-finite-spin}, the boundedness of increments is automatic, and the notion of stability becomes the following.

 {
\begin{defn}
  In our finite-state cases with potentials \eqref{model-finite-spin} we say that a configuration $\omega^0\in\Omega$ is stable with stability constant $c>0$ 
if for all $\omega\in\Omega$ which differ at most at finitely many sites from $\omega^0$, we have the lower bound 
\begin{equation}
\begin{split}
\label{XSpSOS3}
	&H(\omega)- H(\omega^{0}) \geq c\sum_{v}1_{\omega_v\neq\omega^0_v}.
\end{split}
\end{equation}
\end{defn}
}

We then have the following analogue of Lemma \ref{Lem1}.

\begin{lemma}\label{excess2} Consider the finite-state models \eqref{model-finite-spin}. 
	Let $\bar\gamma=(\omega_{\gamma},\gamma)$ 
	be a contour relative to the fixed ground state 
	$\omega^0$. 
	Denote $\omega=(\omega_{\gamma}\omega^0_{\gamma^c})$ 
	the corresponding excited spin configuration. 
	
	Then the excess energy satisfies
\begin{equation}
\begin{split}	
\label{XSfinite}
	&H(\omega)- H(\omega^{0}) 
	=\sum_{v \in \gamma, w\leftarrow v}\Bigl( \Phi(\omega_v,\omega_w)-
	\Phi(\omega^0_v,\omega^0_w)\Bigr)\cr
&	\geq (d-1)u |\gamma|-\sum_{v \in \gamma, w\leftarrow v}\Bigl(	\Phi(\omega^0_v,\omega^0_w)+u 1_{\omega^0_v\neq \omega^0_w}\Bigr) \cr
&\geq ((d-1)u-d_D( U+u)) |\gamma|.
\end{split}
\end{equation}
	\end{lemma}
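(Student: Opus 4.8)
The plan is to establish the displayed chain of one equality and two inequalities from left to right. \emph{The equality} is the easy step: since $\omega=\omega_\gamma\omega^0_{\gamma^c}$ agrees with $\omega^0$ off $\gamma$, every edge not incident to $\gamma$ carries the same potential under $\omega$ and $\omega^0$ and cancels in $H(\omega)-H(\omega^0)$, leaving only the edges incident to $\gamma$. Because $\gamma$ is a connected subtree containing the root $0$, the unique path from any $v\in\gamma$ to $0$ stays in $\gamma$, so the parent of every non-root vertex of $\gamma$ again lies in $\gamma$; consequently each edge incident to $\gamma$ is of the form $\{v,w\}$ with $v\in\gamma$ and $w\leftarrow v$ (the child $w$ being inside $\gamma$ or on its outer boundary), and the oriented sum $\sum_{v\in\gamma,\,w\leftarrow v}$ lists each such edge exactly once. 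This yields the first line.

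For \emph{the first inequality} I would prove the pointwise edge bound
\[
\Phi(\omega_v,\omega_w)\ \geq\ u\bigl(\mathbf 1_{v\in\gamma}-\mathbf 1_{w\in\gamma}-\mathbf 1_{\omega^0_v\neq\omega^0_w}\bigr),
\]
valid for every edge with $v\in\gamma$, which is the exact analogue of the leaf estimate \eqref{leavelower} used in Lemma \ref{Lem1}. When $w\in\gamma$ the right-hand side is $\leq 0\leq\Phi$; when $w\notin\gamma$ one has $\omega_w=\omega^0_w$ while $\omega_v\neq\omega^0_v$, so if the bond is \emph{not} broken under $\omega$ (i.e. $\omega_v=\omega_w=\omega^0_w$) then necessarily $\omega^0_v\neq\omega^0_w$, i.e. it is broken under $\omega^0$; hence $\Phi(\omega_v,\omega_w)\geq u\,\mathbf 1_{\omega_v\neq\omega_w}\geq u\,(1-\mathbf 1_{\omega^0_v\neq\omega^0_w})$, which is the claim. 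Summing over the oriented edges and using the tree bookkeeping $\sum_{v\in\gamma,\,w\leftarrow v}\mathbf 1_{v\in\gamma}=d|\gamma|+1$ (the root has $d+1$ children, the others $d$) and $\sum_{v\in\gamma,\,w\leftarrow v}\mathbf 1_{w\in\gamma}=|\gamma|-1$ (the internal edges), I obtain $\sum\Phi(\omega_v,\omega_w)\geq u\bigl((d-1)|\gamma|+2-\sum\mathbf 1_{\omega^0_v\neq\omega^0_w}\bigr)\geq (d-1)u|\gamma|-u\sum_{v\in\gamma,\,w\leftarrow v}\mathbf 1_{\omega^0_v\neq\omega^0_w}$. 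Subtracting $\sum\Phi(\omega^0_v,\omega^0_w)$ from both sides produces the middle line.

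For \emph{the second inequality} I would only estimate the ground-state terms. Writing $G=\sum_{v\in\gamma,\,w\leftarrow v}\mathbf 1_{\omega^0_v\neq\omega^0_w}$ for the number of ground-state broken bonds incident to $\gamma$, I bound $\sum_{v\in\gamma,\,w\leftarrow v}\Phi(\omega^0_v,\omega^0_w)\leq U\,G$, whence $\sum(\Phi(\omega^0_v,\omega^0_w)+u\,\mathbf 1_{\omega^0_v\neq\omega^0_w})\leq (U+u)G$. Finally each $v\in\gamma$ is incident to at most $d_D(v)\leq d_D$ broken bonds of $\omega^0$, so $G\leq\sum_{v\in\gamma}d_D(v)\leq d_D|\gamma|$, giving $(d-1)u|\gamma|-(U+u)G\geq\bigl((d-1)u-d_D(U+u)\bigr)|\gamma|$, the last line.

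The step needing the most care is the pointwise edge bound, and within it the boundary dichotomy ``unbroken under $\omega$ forces broken under $\omega^0$'': this is what replaces the symmetry and spin-flip arguments needed on the lattice, and it is exactly where the emptiness of the contour interior is used, since it guarantees that every outer-boundary spin already equals the ground state. The remaining work is purely combinatorial edge-counting on the rooted subtree $\gamma$ together with the substitution of the constants $u$, $U$ and $d_D$.
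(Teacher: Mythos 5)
Your proposal is correct and follows essentially the same route as the paper: your pointwise edge bound $\Phi(\omega_v,\omega_w)\geq u(\mathbf 1_{v\in\gamma}-\mathbf 1_{w\in\gamma}-\mathbf 1_{\omega^0_v\neq\omega^0_w})$ is literally the paper's inequality \eqref{leavelower3} (since $v\in\gamma$ iff $\omega_v\neq\omega^0_v$), proved by the same boundary dichotomy, and your rooted-subtree edge count $(d-1)|\gamma|+2$ and the final substitution of $u$, $U$, $d_D$ match the paper's \eqref{XSSOS} and concluding step exactly. No gaps.
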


\proof 
As $\gamma$ is the support 
of the contour we have $\omega_v\neq \omega^0_v$ 
for $v\in \gamma$ and $\omega_w= \omega^0_w$ for $w\not\in \gamma$. 

First, realize that
\begin{equation}
\begin{split}\label{leavelower3}
\Phi(\omega_v,\omega_w)\geq u 1_{\omega_v\neq \omega^0_v}
-u 1_{\omega_w\neq \omega^0_w}
- u 1_{\omega^0_v\neq \omega^0_w}.
\end{split}
\end{equation}
To see this write the inequality in the equivalent form 
\begin{equation}
\begin{split}\label{leavelower4}
\Phi(\omega_v,\omega_w)+u 1_{\omega^0_v\neq \omega^0_w}+ u 1_{\omega_v=\omega^0_v}
\geq u 1_{\omega_w=\omega^0_w}.
\end{split}
\end{equation}
The inequality trivially holds when the r.h.s. equals zero, so let us 
assume $\omega_w=\omega^0_w$. In the subcase
 $\omega_v=\omega^0_v$ the inequality obviously holds. 
 In the subcase $\omega_v\neq\omega^0_v$ it is impossible that the two first 
 terms on the l.h.s. reach zero at the same time. This proves the claim (\ref{leavelower3}).

Hence we have 
\begin{equation}
\begin{split}
\label{XSSOS}
	&\sum_{v \in \gamma, 
	w\leftarrow v}\Bigl(  
	1_{\omega_v\neq \omega^0_v}
-1_{\omega_w\neq \omega^0_w}
\Bigr)\cr
&= (d+1)1_{\omega_0\neq \omega^0_0}+ (d-1)
\sum_{v\in \gamma\backslash 0}1_{\omega_v\neq \omega^0_v} \geq (d-1)|\gamma|.
\end{split}
\end{equation}
From the last two inequalities the claim (\ref{XSfinite}) follows. 
\endproof

{
\begin{cor}\label{cor-finite-spin} For each interaction constants $u,U$, and each maximal {internal} degree 
  ${d_{\max}}\in \N_0$ there is a minimal degree 
  $d(u,U,{d_{\max}})$ such that for all $d\geq d(u,U,{d_{\max}})$, 
  all configurations {$\omega^0\in \G^0_q({d_{\max},d})$} are stable ground states.
  \end{cor}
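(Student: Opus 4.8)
The plan is to deduce this corollary from Lemma \ref{excess2} in exactly the same way that Corollary \ref{cor} is deduced from Lemma \ref{Lem1}: the per-contour excess-energy bound proved there is already linear in $|\gamma|$, so it suffices to ensure that its prefactor is strictly positive once $d$ is large. First I would fix $\omega^0\in\G^0_q(d_{\max},d)$, so that its broken-bond set $D=\{\{v,w\}\in E:\omega^0_v\neq\omega^0_w\}$ satisfies $d_D\leq d_{\max}$. Since $U+u>0$, the last line of \eqref{XSfinite} is monotone decreasing in $d_D$, so for a single contour $\bar\gamma=(\gamma,\omega_\gamma)$ relative to $\omega^0$, with excited configuration $\omega=\omega_\gamma\omega^0_{\gamma^c}$, Lemma \ref{excess2} gives the uniform lower bound
\begin{equation*}
H(\omega)-H(\omega^0)\ \geq\ \bigl((d-1)u-d_{\max}(U+u)\bigr)\,|\gamma|.
\end{equation*}

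The one point that Lemma \ref{excess2} does not literally cover is that a general finite perturbation of $\omega^0$ need not be connected, so I would first reduce to the single-contour case. Writing $S=\{v:\omega_v\neq\omega^0_v\}$ for a general $\omega$ differing from $\omega^0$ at finitely many sites, and decomposing $S$ into its connected components $\gamma_1,\dots,\gamma_n$ (the contour supports), the tree structure guarantees that every edge meeting $S$ meets exactly one $\gamma_i$, while edges with both endpoints in $S^c$ contribute nothing to $H(\omega)-H(\omega^0)$. Hence the excess energy is additive over contours, $H(\omega)-H(\omega^0)=\sum_{i=1}^n\bigl(H(\omega^{(i)})-H(\omega^0)\bigr)$, where $\omega^{(i)}=\omega_{\gamma_i}\omega^0_{\gamma_i^c}$ is the excited configuration of the single contour $\bar\gamma_i=(\gamma_i,\omega_{\gamma_i})$. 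Applying the displayed bound to each $\omega^{(i)}$ and summing yields $H(\omega)-H(\omega^0)\geq c\sum_i|\gamma_i|=c\sum_v 1_{\omega_v\neq\omega^0_v}$ with $c:=(d-1)u-d_{\max}(U+u)$, which is precisely the stability estimate \eqref{XSpSOS3}.

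It then remains only to make $c>0$, i.e. to impose the sparsity condition \eqref{sparsity-qspin}, namely $(d-1)u>d_{\max}(U+u)$. This holds as soon as $d>1+d_{\max}(U+u)/u$, so I would set $d(u,U,d_{\max})=2+\lfloor d_{\max}(U+u)/u\rfloor$; for every $d\geq d(u,U,d_{\max})$ one has $d-1\geq 1+\lfloor d_{\max}(U+u)/u\rfloor>d_{\max}(U+u)/u$, giving $c>0$ and thus stability with constant $c$. I do not expect any serious obstacle: everything is elementary once Lemma \ref{excess2} is in hand, and the only step needing a word of justification is the additivity of the excess energy over connected components, which is immediate on a tree because distinct contours are separated by a layer of correct vertices on which $\omega$ coincides with $\omega^0$.
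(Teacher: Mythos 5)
Your proof is correct and follows essentially the same route as the paper: invoke Lemma \ref{excess2}, take $d(u,U,d_{\max})=2+\lfloor d_{\max}(u+U)/u\rfloor$, and read off stability with constant $c=(d-1)u-d_{\max}(U+u)>0$. The only addition is your explicit reduction from a general finite perturbation to single contours via additivity of the excess energy over connected components; this step is valid (and correctly justified by the tree structure), and the paper simply leaves it implicit.
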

}
  
  {
  \begin{proof}[Proof of Corollary \ref{cor-finite-spin}] {Take} 
  $$d(u,U,{d_{\max}})= 2+ \lfloor d_{\max}(u+U)/u\rfloor$$ Then indeed 
  $$(d-1)u-d_{\max}(u+U)=:c>0 $$ which by Lemma \ref{excess2} implies stability with the constant $c$. Note that $d(u,U,{d_{\max}})$ does not depend on $q$.
  \end{proof}
  }


\section{Properties of low-temperature states}\label{sec-properties}

Low temperature expansions on trees have unusual properties, 
as compared to similar expansions on lattices. 

First, there is the lack of limiting free energies, which is another way of 
saying that boundary terms are not smaller than volume terms. In particular, this provocates the failure of variational principles for Gibbs measures on trees (see e.g. \cite{BuPf95}, also \cite{FollSne77}, {Remarks 3.11},
for a valid "inner" variational principle). 
Next, on trees 
the complement of a support of a contours is never a connected set, 
and there are never interior connected components. This facilitates the extension of Peierls argument to {not} necessarily symmetric frameworks, as we discuss in Section \ref{sec-tightness}.

All of this requires care in proper handling when it comes to more 
subtle properties, see e.g. the decorrelation property \eqref{decorr} for unbounded support sets, which we use to prove extremality, after having properly introduced specific cutsets to take care of possibly {atypical tail-events of unbounded support}. 
We thus need to be precise to ensure convergence 
in particular in the case of unbounded spin models, 
and especially as we do not have homogeneity of our ground states.

\subsection{{p}-SOS models: {Proof of Theorem \ref{Theorem1}}}

\subsubsection{Convergence proof for the partition function}

We now turn to the proof of convergence of cluster expansion 
in the case of the $p$-SOS model, 
assuming, for $\eta>0$ sufficiently large, the lower bound of the form 
\begin{equation}
\begin{split}
\label{XSSOS4}
	&\beta(H(\omega)- H(\omega^{0})) 	\geq \eta\sum_{v\in \gamma}|\omega_v-\omega^0_v|^p
\end{split}
\end{equation}
{with $\bar\gamma=(\omega_{\gamma},\gamma)$ a contour relative to the fixed ground state $\omega^0$ and $\omega=(\omega_{\gamma}\omega^0_{\gamma^c})$ the corresponding excited spin configuration. 
This bound is given by the excess energy Lemma \ref{Lem1}.}

\smallskip

We start with a polymer partition function representation of the spin partition function 
in a finite volume $\Lambda$ with boundary condition equal to $\omega^{0}$, which reads 
\begin{equation}
\begin{split}
Z_{\Lambda}^{\omega^0}={\sum_{n\in\N}}\sum_{\bar \gamma_1, \dots, \bar \gamma_n}
\prod_{i=1}^n \rho(\bar\gamma_i)
\end{split}
\end{equation}
where the sum is over pairwise compatible polymers $\bar\gamma$ with activities 
$$
\rho(\bar\gamma)=e^{- \beta(H_{\gamma\cup \partial \gamma}(\omega_{\gamma\cup \partial \gamma})- 
H_{\gamma\cup \partial \gamma}(\omega^{0}_{\gamma\cup \partial \gamma}))}
$$
given in terms of the excess energy. 

\smallskip

In our case the pairwise compatibility relation $\bar\gamma_1 \sim \bar\gamma_2$ 
is equivalent to the separation of their supports, i.e. $\text{dist}(\gamma_1,\gamma_2)\geq 2$. 
Recall that, by definition, the spin configuration on the complement of 
the union of the supports 
of the polymers $\cup_{i=1}^n \gamma_i$ 
necessarily coincides with the ground state $\omega^{0}$. 
The aim of the cluster expansion is to write 
\begin{equation}
\begin{split}
\log Z_{\Lambda}^{\omega^0}= {\sum_{I}w_I}
\end{split}
\end{equation}
as an analytic function in the complex variables $\rho(\bar\gamma)$ for 
$\bar\gamma\in P_{\Lambda}$ where 
$P_{\Lambda}$ denotes the set of polymers in the finite volume $\Lambda$
for the given fixed ground state, and 
$(w_I)_{I\in \mathbb N_0^{P_{\Lambda}}}$ are the expansion 
terms. For fixed multi{-}index $I$, $w_I$ is proportional to $$\prod_{\bar\gamma \in P_{\Lambda}}\rho(\bar \gamma)^{I(\bar\gamma)}.$$ 
We have the following quantitative convergence criterion. 

\prop{For each degree $d\geq 2$ and interaction exponent 
$p\in (0,\infty)$ there is a finite constant $\eta_0(d,p)$ such that 
for all $\eta \geq \eta_0(d,p)$ 
the cluster expansion converges for all polymer activities in the polydisk 
\begin{equation}\label{invab}
\begin{split}|\rho(\bar \gamma)|\leq
\exp\left(-\eta\sum_{v\in \gamma}|\omega_v-\omega^0_v|^p\right) 
\end{split}
\end{equation}
for all $\bar \gamma$.

}\label{Prop3}

\rk{Note that we have an infinite polymer family even in finite volume 
as {there are infinitely many possible height configurations or contours $\bar\gamma$ 
with the same geometric support $\gamma$}.  }

\proof
We use thus the convergence criterion for the logarithm of the partition function of  
abstract polymer models of Bovier-Zahradn\'ik (\cite{BZ00}, Theorem 1, page 768).
%
We choose the generalized volume function $a$ which depends also on the heights on the contour support 
given by 
\begin{equation}
\begin{split}
\label{XSSOS5}
	&a(\bar\gamma)=A\sum_{v\in \gamma}|\omega_v-\omega^0_v|^p
\end{split}
\end{equation}
where $A>0$ can be chosen to our convenience, see below.

We are guaranteed of the convergence of the cluster expansion if 
the following two conditions hold, for suitable choices of $A>0$, $\delta\in (0,1)$ which 
will be made below.  

\smallskip 

\noindent\textit{Condition 1.} For any polymer $\bar \gamma$ 
\begin{equation}
\begin{split}\label{Condi1}
|\rho(\bar \gamma)| e^{a(\bar \gamma )}\leq \delta.
\end{split}
\end{equation}
\textit{Condition 2.} For any polymer $\bar \theta$ 
\begin{equation}
\begin{split}\label{Condi2}
\sum_{\bar \gamma \not\sim \bar \theta }
|\rho(\bar \gamma)|e^{a(\bar \gamma)}\leq \frac{1}{L(\delta)}a(\bar \theta)
\end{split}
\end{equation}
with $L(\delta)=-\log(1-\delta){/\delta}$. 
Here the sum is over \textit{incompatible} polymers $\bar \gamma \not\sim \bar \theta$. 
In our case incompatibility means that $\gamma$ and $\theta$ have graph 
distance less or equal than  $1$.

Under these two conditions, Theorem 1 of Bovier-Zahradn\'ik \cite{BZ00}
provides moreover the quantitative estimate 
\begin{equation}\label{66}
\begin{split}
\sum_{I\ni \bar\theta }|w_I|&\leq L(\delta)|\rho(\bar\theta)|e^{a(\bar \theta)}\cr
&\leq L(\delta)\exp\bigl(-(\eta-A)\sum_{v\in {\theta}}|\omega_v-\omega^0_v|^p\bigr)\cr
\end{split}
\end{equation}
where the sum $I$ is over those multi-indices $I$ which carry at least power 
one for polymer $\bar\theta$, and $w_I$ are the corresponding terms in the expansion 
of the logarithm of the partition function.  

{To treat our infinite family of contours with the Bovier-Zahradn\'ik convergence 
criteria for models of finite polymer families \cite{BZ00}, we may 
first use truncation of the height variables in modulus.    
We then treat the truncated models uniformly at fixed truncation, 
see in particular the uniformity of the relevant estimate \eqref{66} below in the truncation. 
In the final step one uses dominated convergence to remove the truncation. }

Now, the first condition \eqref{Condi1} is satisfied for $e^{A}\leq \delta e^{\eta}$. 

To treat the second condition \eqref{Condi2} is the more serious requirement.
We bound the sum over incompatible polymers as
\begin{equation}\begin{split}
&\sum_{\bar \gamma \not\sim \bar \theta }
|\rho(\bar \gamma)| e^{a(\bar \gamma)}
\leq \sum_{v \in \theta \cup \partial \theta}\sum_{\bar \gamma, \gamma \ni v}|\rho(\bar \gamma)| e^{a(\bar \gamma)}\cr
&\leq \sum_{v \in \theta \cup \partial \theta}
\sum_{\bar \gamma, \gamma \ni v}\exp\bigl( -(\eta-A)\sum_{w\in \gamma}|\omega_w-\omega^0_w|^p
\bigr)\cr
\end{split}
\end{equation}
Note that, while the contour activities on the l.h.s are in general not tree-automorphism 
invariant, we have used the {\it invariant bounds} \eqref{invab} for them to obtain 
the expression on the r.h.s. {The bounds are invariant as they suppress 
the activity in terms of the local deviation from the ground state  
$|\omega_w-\omega^0_w|^p$ with the same site-independent prefactor.}
To bound the last sum from above, we decompose it into 
a sum over the geometric parts of the contour with fixed volume $|\gamma|=l$ 
and, conditional on that, 
the sum over the height-configurations $\omega_v\neq \omega^0_v$.  
This provides 
\begin{equation}
\begin{split}
&\sum_{\bar \gamma, \gamma \ni v}
\exp\bigl( -(\eta-A)\sum_{w\in \gamma}|\omega_w-\omega^0_w|^p
\bigr)\cr
&=\sum_{l=1}^\infty\Bigl(\sum_{s\in \Z \backslash 0} 
e^{-|s|^p (\eta - A)})\Bigr)^l\,\, \#\{\gamma:\gamma \ni 0, |\gamma|=l.
\}
\end{split}
\end{equation}

The geometric entropy bound of 
Lemma 6 of \cite{GRS12} (formulated there for the Ising model, but valid in general as it is a purely combinatoric statement) 
provides the estimate 
\begin{equation}
\begin{split}
 \#\{\gamma:\gamma \ni 0, |\gamma|=l
\}\leq (d+1)^{2 (l-1)}
\end{split}
\end{equation}
in terms of the number of bonds $l-1$ of a subtree with $l$ vertices. 
Using the bound $|\theta \cup \partial \theta|\leq (d+2)|\theta|$ with equality 
for a singleton $\theta$,  we see that 
the second condition \eqref{Condi2} is implied if we have 
\begin{equation}
\begin{split}
(d+2)
\sum_{l=1}^\infty\Bigl(\sum_{s\in \Z \backslash 0} 
e^{-|s|^p (\eta - A)})\Bigr)^l\,\, (d+1)^{2 (l-1)}\leq \frac{A}{L(\delta)}.
\end{split}
\end{equation}
This is equivalent to 
\begin{equation}
\begin{split}
\sum_{l=1}^\infty\Bigl((d+1)^{2}\sum_{s\in \Z \backslash 0} 
e^{-|s|^p (\eta - A)})\Bigr)^l\,\, \leq \frac{(d+1)^2}{(d+2)}\frac{A}{L(\delta)}.
\end{split}
\end{equation}
Let us fix $A=1$ and $\delta = \frac{1}{2}$. 
Then we see that we may chose indeed  $\eta_0(d,p)<\infty$ such that 
for all $\eta \geq \eta_0(d,p)$  the inner 
series with summation over $s$ 
on the l.h.s. becomes small enough to ensure the validity of 
the desired inequality. Enlarging  $\eta_0(d,p)$ if necessary we can achieve 
that also the first condition holds, and this finishes the proof of Proposition \ref{Prop3}. 

\endproof

We now prove the existence of the infinite volume measure. Although getting it from convergent cluster expansions can be considered to be familiar on the lattice, the use of low temperature expansions on the tree is less standard 
and requires a careful treatment we present now.

\subsubsection{Finite-dimensional convergence and tightness}\label{sec-finite-dim-conv}

{
In the convergence regime of Proposition \ref{Prop3} for the expansion around 
stable $\omega^0$, we also have the associated finite-dimensional convergence 
of the finite-volume Gibbs measures. }

\begin{lemma}\label{Lem3} Let $W$ denote a finite subvolume of the vertex set of the infinite tree $V$. 
Then 

\begin{equation}
\begin{split}
\lim_{\Lambda \uparrow V}\mu_{\Lambda}^{\omega^0}(\sigma_W- \omega_W^0 \in \cdot) 
\end{split}
\end{equation}
exists as a weak limit. 
\end{lemma}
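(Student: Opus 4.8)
The plan is to prove weak convergence of the laws of the increment vector $\sigma_W-\omega_W^0$ on $\Z^W$ by showing that their characteristic functions converge pointwise to a function continuous at the origin, and then to invoke L\'evy's continuity theorem. For $t=(t_v)_{v\in W}\in[0,2\pi)^W$ I would first write
\[
\E_{\mu_\Lambda^{\omega^0}}\Bigl[\prod_{v\in W} e^{i t_v(\sigma_v - \omega_v^0)}\Bigr] = \frac{Z_\Lambda^{\omega^0}(t)}{Z_\Lambda^{\omega^0}},
\]
where $Z_\Lambda^{\omega^0}(t)$ is the partition function carrying the extra on-site factor $e^{it_v(\omega_v-\omega_v^0)}$ for $v\in W$. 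Passing to the polymer representation, this tilt is absorbed into the activities of exactly those contours whose support meets $W$: set $\rho_t(\bar\gamma)=\rho(\bar\gamma)\prod_{v\in\gamma\cap W}e^{it_v(\omega_v-\omega_v^0)}$, leaving $\rho_t(\bar\gamma)=\rho(\bar\gamma)$ for every contour disjoint from $W$.

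The key observation is that $|\rho_t(\bar\gamma)|=|\rho(\bar\gamma)|$, so the tilted activities lie in the very same polydisk \eqref{invab}, and Proposition \ref{Prop3} applies verbatim to both numerator and denominator: $\log Z_\Lambda^{\omega^0}(t)$ and $\log Z_\Lambda^{\omega^0}$ admit absolutely convergent cluster expansions, with bounds uniform in $t$ and in $\Lambda$. Subtracting,
\[
\log \E_{\mu_\Lambda^{\omega^0}}\Bigl[\prod_{v\in W} e^{i t_v(\sigma_v - \omega_v^0)}\Bigr] = \sum_{I} \bigl(w_I(t) - w_I\bigr),
\]
where $w_I(t)-w_I$ vanishes unless the cluster $I$ contains at least one contour meeting $W$, since only those contours carry a modified activity.

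Next I would argue that the right-hand side converges as $\Lambda\uparrow V$. Each cluster weight $w_I$ depends only on the contours composing $I$, and is independent of the ambient volume once $\Lambda$ contains them; hence each term $w_I(t)-w_I$ is eventually $\Lambda$-independent, and the family of clusters touching the fixed finite set $W$ increases to the corresponding family on the infinite tree. The quantitative estimate \eqref{66} dominates the sum over all clusters touching $W$ uniformly in $\Lambda$ and in $t$, so dominated convergence yields a limit $\hat\mu(t)=\exp\bigl(\sum_I(w_I(t)-w_I)\bigr)$, the sum now ranging over clusters of the infinite tree that meet $W$. The same uniform majorant shows $\hat\mu$ is a uniform limit of continuous functions, hence continuous, with $\hat\mu(0)=1$; by L\'evy's continuity theorem the laws of $\sigma_W-\omega_W^0$ converge weakly (and the limit is automatically a probability measure, so no mass escapes). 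For unbounded $\Omega_0=\Z$ I would, exactly as in the proof of Proposition \ref{Prop3}, first carry this out at a fixed height-truncation and remove the truncation by dominated convergence at the very end.

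The main obstacle is the non-compactness of the spin space: contours meeting the fixed finite set $W$ may have arbitrarily large support and arbitrarily large increments, so the relevant family of clusters is infinite. What rescues the argument, and is the real content, is that the tilt factors have modulus one and therefore leave the invariant activity bounds \eqref{invab} untouched, so the exponential control \eqref{66} provides a $t$- and $\Lambda$-uniform summable majorant. This is precisely the point at which a naive direct computation of the cylinder probabilities would be delicate in the unbounded setting, whereas the characteristic-function formulation goes through cleanly.
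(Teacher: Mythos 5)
Your proof is correct and follows essentially the same route as the paper: both express the characteristic function of $\sigma_W-\omega_W^0$ as a ratio of polymer partition functions with tilted activities $\rho_t$ of unchanged modulus, and both control the cluster sums over clusters meeting $W$ uniformly in $\Lambda$ and $t$ via the Bovier--Zahradn\'ik bound \eqref{66}. The only divergence is at the final step: the paper supplements pointwise convergence of the Fourier transforms with an explicit quantitative tightness estimate (a contour/Peierls bound it also reuses later for concentration and identifiability), whereas you invoke the continuity-at-the-origin form of L\'evy's theorem --- precisely the alternative the paper itself remarks is admissible.
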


\proof To prove Lemma \ref{Lem3} we will show pointwise convergence of the Fourier-transform, and tightness of the measures. 

\medskip

{\bf Fourier transform. } Consider the Fourier transform for $t \in \R^W$ in finite volume $\Lambda$ 
\begin{equation}
\begin{split}
&\mu_{\Lambda}^{\omega^0}(e^{i\langle\sigma- \omega^0,t\rangle_W}) \cr
\end{split}
\end{equation}
with the notation $\langle a,b\rangle_W=\sum_{v\in W}a_v b_v$. 
Define $t$-dependent complex activities
$$\rho_t(\bar\gamma)=\rho(\bar\gamma)
e^{i\langle\sigma- \omega^0,t\rangle_{W\cap \gamma}}.
$$
We have $|\rho_t(\bar\gamma)|=\rho(\bar\gamma)$. 
Then 
\begin{equation}
\begin{split}
&\mu_{\Lambda}^{\omega^0}(e^{i\langle\sigma- \omega^0,t\rangle_W}) 
=\frac{{\sum_n}\sum^{\Lambda}_{\bar\gamma_1,\dots, \bar\gamma_n}\prod_{i=1}^n \rho_t(\bar\gamma_i)}{
{\sum_n}\sum^{\Lambda}_{\bar\gamma_1,\dots, \bar\gamma_n}\prod_{i=1}^n \rho(\bar\gamma_i)}\cr
&=\exp\Bigl({\sum^{\Lambda}_{I\cap W\neq \emptyset}}(w^t_I-w_I)\Bigr) 
\end{split}
\end{equation}
where $w_t$ are the cluster weights corresponding to the $t$-dependent activities, 
and the upper index $\Lambda$ in our notation $\sum^\Lambda$ indicates that the sums correspond to 
the family of polymers with supports inside the finite volume $\Lambda$.

Then, {by the Bovier-Zahradn\'ik bound \eqref{66}} there is absolute convergence of the cluster sums:
\begin{align*}
	\sup_{\Lambda}\sum^{{\Lambda}}_{I\cap W\neq \emptyset}(|w^t_I|+|w_I|) \leq 
\sum_{v\in W}\sum_{I\ni v}(|w^t_I|+|w_I|)\\
{\leq |W|\cdot 2L(\delta)\cdot \sum_{s\in\Z\backslash 0}e^{(-\eta+A)|s|^p}}\leq  |W| C
\end{align*}
and therefore the limit $\Lambda \uparrow V$ exists, pointwise in any $t \in \R^W$. 

\medskip

{\bf Quantitative {t}ightness estimate via contour-estimate.} \label{sec-tightness}
We start with the following upper bound. 
For any deviation $s\neq 0$ we have for the finite-volume fixed-site marginal 
\begin{equation}
\begin{split}
&\mu^{\omega^0}_{\Lambda}(\sigma_v=\omega_v^0+s)
=\sum_{\bar\gamma:\gamma\ni v, {\sigma_v=\omega_v^0+s}}^{\Lambda}\rho(\bar\gamma) \cdot
\frac{{\sum_n}\sum^{\Lambda}_{\bar\gamma_1,\dots, \bar\gamma_n \sim \bar\gamma}\prod_{i=1}^n \rho(\bar\gamma_i)}{
{\sum_n}\sum^{\Lambda}_{\bar\gamma_1,\dots, \bar\gamma_n}\prod_{i=1}^n \rho(\bar\gamma_i)}.\cr
\end{split}
\end{equation}

Now, on trees, as already underlined, contours  {have} 
no interior and {so we extend the Peierls argument to our non-symmetric cases by bounding  
 the fraction of polymer partition functions above by $1$.} We arrive thus at the upper bound 
\begin{equation}
\begin{split}
&\mu^{\omega^0}_{\Lambda}(\sigma_v=\omega_v^0+s)\leq \sum_{\bar\gamma:\gamma\ni v, {\sigma_v=\omega_v^0+s}}\rho(\bar\gamma)\cr
\end{split}
\end{equation}
where we extended the sum in $\Lambda$ to involve all polymers in $V$. 
The last sum is bounded by 
\begin{equation}
\begin{split}
&e^{-|s|^p \eta } \sum_{l=1}^\infty\#\{ \gamma\ni v: |\gamma|=l\}\Bigl(\sum_{r\in \Z \backslash 0} 
e^{-|r|^p\eta}
\Bigr)^{l-1} \cr
&\leq e^{-|s|^p \eta }C(\eta,d,p) \cr
\end{split}
\end{equation}
where 
\begin{equation}
\begin{split}\label{Ceta}
&C(\eta,d,p)
:= \sum_{m=0}^\infty
\Bigl((d+1)^{2} \sum_{r\in \Z \backslash 0} 
e^{-|r|^p \eta }\Bigr)^m\downarrow 1\cr
\end{split}
\end{equation}
as $\eta\uparrow \infty$ for fixed $d,p$, and is in particular finite for $\eta$ sufficiently large.

This provides tightness of the single-site marginals of the family $\mu^{\omega^0}_{\Lambda }$ 
as one gets a uniform estimate in $\Lambda$ of the form 
\begin{equation}
\begin{split}
&\mu^{\omega^0}_{\Lambda}(|\sigma_v-\omega_v^0|\geq N)
\leq  2 C(\eta,d,p) \sum_{s\geq N}e^{-|s|^p \eta }.
\end{split}
\end{equation}
Consequently we get for the marginals in finite volume $W$ 
\begin{equation}
\begin{split}
&\mu^{\omega^0}_{\Lambda}(\max_{v\in W}|\sigma_v-\omega_v^0|\geq N)
\leq  2 | W | C(\eta,d,p) \sum_{s\geq N}e^{-|s|^p \eta }.
\end{split}
\end{equation}
This is the desired tightness of the finite-volume marginals 
\begin{equation}
\begin{split}
&\lim_{N\uparrow \infty}\sup_{\Lambda}\mu^{\omega^0}_{\Lambda}(\max_{v\in W}|\sigma_v-\omega_v^0|\geq N)=0.
\end{split}
\end{equation}
Using the L\'evy-continuity theorem we obtain the existence of the pointwise limit 
of the characteristic functions and the tightness the weak convergence of the finite-dimensional 
marginals of $\mu_{\Lambda}^{\omega^0}$.

Alternatively we could have concluded the convergence of the marginals on $W$ without tightness 
from the L\'evy continuity theorem by proving instead 
continuity of the limit of the Fourier transform in $t=0$ which can be seen 
from the uniform convergence of the cluster-sums. 

{This concludes the proof of Lemma \ref{Lem3}. \endproof}

\subsubsection{DLR-property and identifiability of family of measures}\label{sec-DLR}

Note that the limiting finite-dimensional marginals indexed by $W$ of Lemma \ref{Lem3}
provide a family of consistent measures in the sense of Kolmogorov's extension theorem, 
and hence define an infinite-volume measure $\mu^{\omega^0}$,  for each of 
the corresponding sparse ground states $\omega^0$. Let us see that {$\mu^{\omega^0}$}
is also a Gibbs measure,  in the usual DLR sense \cite{HOG}, using again the convergence of 
finite-dimensional marginals, and consistency of the kernels.

\medskip

{\bf DLR-property of limiting measures.} 
Note first that for any cofinal\footnote{{See \cite{HOG}. A subset $\mathcal{S}_0$ of an index set $\mathcal{S}$ directed by inclusion is called {cofinal} if each $\Lambda \in \mathcal{S}$ is contained in some $\Delta \in \mathcal{S}_0$.}} sequence $(\Lambda_n)_n$, any  $\omega_\Lambda\in \Omega_\Lambda$ for $\Lambda \Subset V$ we have for $n$ sufficiently large 
that 
\begin{equation}
\begin{split}
&\int\mu^{\omega^0}_{\Lambda_n}(d\tilde\omega_{\partial \Lambda})\gamma_{\Lambda}
(\sigma_{\Lambda}=\omega_{\Lambda}| \tilde\omega_{\partial \Lambda})=\mu^{\omega^0}_{\Lambda_n}(\sigma_{\Lambda}=\omega_{\Lambda}).\cr
\end{split}
\end{equation}
{But from the convergence of finite-dimensional marginals,  the spatial Markov property 
of the kernel follows, and after performing the large $n$-limit, 
\begin{equation}
\begin{split}
&\int\mu^{\omega^0}(d\tilde\omega_{\partial \Lambda})\gamma_{\Lambda}
(\sigma_{\Lambda}=\omega_{\Lambda}| \tilde\omega_{\partial \Lambda})
=\mu^{\omega^0}(\sigma_{\Lambda}=\omega_{\Lambda})\cr
\end{split}
\end{equation}
which is the DLR-equation (see e.g. \cite{HOG}). }

\medskip

{\bf Identifiability of states from the set of sparse {ground states.}} 
We show now that we have $\mu^{\omega^0}\neq \mu^{\tau^0}$ when $\omega^0\neq \tau^0$ 
and both are from the set of stable ground states obeying \eqref{generalsparsity}. 
To see this consider $v$ such that $\omega^0_v\neq \tau^0_v$ 
and compute 
\begin{equation}
\begin{split}
&\mu^{\omega^0}(\sigma_v=\omega_v^0)\geq 1-2 C(\eta,d,p) \sum_{s\geq 1}e^{-|s|^p \eta }
\end{split}
\end{equation}
(which is close to $1$ by \eqref{Ceta}). Compare to 
\begin{equation}
\begin{split}
&\mu^{\tau^0}(\sigma_v=\omega_v^0)\leq C(\eta,d,p) e^{-|\tau^0_v-\omega^0_v |^p \eta}
\end{split}
\end{equation}
which is close to zero, and hence both probabilities are different for $\eta$ sufficiently large: one can identify different states for different low-temperatures excitations of different sparse ground states. 

\subsubsection{Extremality via cutsets and decorrelation}\label{sec-cutset}

{\bf Cutset property.}} 
Fix a ground state $\omega^0$. Let $U\subset W$ be two finite nested volumes. 
We say that there is {\it an $(U,W)$-cutset (w.r.t $\omega^0$) in the configuration $\omega$} 
if every path from $U$ to infinity has a site $v\in W\setminus U$ for which $\omega_v=\omega^0_v$. 

First we prove that there are always cutsets around arbitrary large volumes in sufficiently 
large annuli, with $\mu^{\omega^0}$-probability arbitrarily close to one: 

\begin{lemma} \label{Lem4} Let $v$ be an arbitrary vertex and $B(r,v)\subset V$ 
the ball of radius $r$ and center $v$ on the tree w.r.t. to the graph distance. 

Then, for any radius $r$ and $\epsilon >0$ there exists a finite radius $R>r$ such 
there is a $\big(B(r,v),B(R,v)\big)$-cutset with $\mu^{\omega^0}$-probability at least $1-\epsilon$. 
\end{lemma}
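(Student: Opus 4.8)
The plan is to show that the complementary event---the \emph{absence} of a $\big(B(r,v),B(R,v)\big)$-cutset---is exponentially unlikely in the width $L:=R-r$ of the annulus $A:=B(R,v)\setminus B(r,v)$, so that choosing $R$ large makes its probability smaller than $\epsilon$. First I would observe that this event is local: since every path from $B(r,v)$ to infinity must exit through the outer sphere at distance $R$, the cutset is present if and only if every self-avoiding path inside $A$ joining the inner sphere (distance $r+1$) to the outer sphere (distance $R$) contains at least one \emph{correct} site $w$ (i.e.\ $\sigma_w=\omega^0_w$). Hence the cutset fails precisely when there exists a self-avoiding crossing path $P\subset A$ all of whose sites are \emph{incorrect}. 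Because the tree has no cycles, any such $P$ is forced to be monotone in the distance to $v$, so it visits exactly one vertex at each distance $r+1,\dots,R$ and thus has $|P|=L$ vertices. In particular the event depends only on $(\sigma_w)_{w\in A}$, so its $\mu^{\omega^0}$-probability is well defined and is the limit of the finite-volume ones.

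The central estimate bounds, for a fixed crossing path $P$, the probability that $P$ is entirely incorrect. Here I would use that the set of incorrect sites $\{w:\sigma_w\neq\omega^0_w\}$ decomposes into contour supports and that $P$ is connected; hence on this event $P$ lies inside the support $\gamma$ of a \emph{single} contour $\bar\gamma=(\gamma,\omega_\gamma)$ with $\gamma\supseteq P$. As in the tightness argument, bounding the ratio of polymer partition functions by $1$ gives the Peierls-type bound $\mu^{\omega^0}(\bar\gamma\ \text{present})\le\rho(\bar\gamma)$, so that, using the invariant weight bound \eqref{invab},
\begin{equation*}
\mu^{\omega^0}\bigl(\sigma_w\neq\omega^0_w\ \forall w\in P\bigr)\le\sum_{\bar\gamma:\,\gamma\supseteq P}\rho(\bar\gamma)\le\sum_{\bar\gamma:\,\gamma\supseteq P}\prod_{w\in\gamma}e^{-\eta|\omega_w-\omega^0_w|^p}.
\end{equation*}
Summing first over the height labels on $\gamma$ (each site contributing a factor $\sum_{s\neq 0}e^{-\eta|s|^p}$) and then over the connected geometric extensions of $P$, the same geometric entropy estimate used in Proposition \ref{Prop3} (and in \eqref{Ceta}) yields
\begin{equation*}
\mu^{\omega^0}\bigl(\sigma_w\neq\omega^0_w\ \forall w\in P\bigr)\le C(\eta,d,p)\Bigl(\sum_{s\in\Z\setminus 0}e^{-\eta|s|^p}\Bigr)^{L},
\end{equation*}
the extra polynomial factors coming from the choice of the attaching site of the extension being harmless against the exponential gain.

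Finally I would count the crossing paths by a union bound: there are at most $(d+1)d^{r-1}$ vertices at distance $r$, and from each exactly $d^{L}$ monotone descending paths reaching distance $R$, so the number of crossing paths is at most $(d+1)d^{r+L-1}$. Combining with the per-path estimate,
\begin{equation*}
\mu^{\omega^0}\bigl(\text{no }\big(B(r,v),B(R,v)\big)\text{-cutset}\bigr)\le C(\eta,d,p)\,(d+1)d^{r-1}\Bigl(d\sum_{s\in\Z\setminus 0}e^{-\eta|s|^p}\Bigr)^{L}.
\end{equation*}
Since $\sum_{s\neq 0}e^{-\eta|s|^p}\to 0$ as $\eta\to\infty$, after enlarging $\beta_0$ if necessary we have $d\sum_{s\neq0}e^{-\eta|s|^p}<1$; with $r$ fixed the right-hand side then tends to $0$ as $L=R-r\to\infty$, so choosing $R$ large enough makes it $\le\epsilon$. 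I expect the main obstacle to be the central per-path estimate, namely legitimately passing from the event ``$P$ incorrect'' to a \emph{single}-contour sum and controlling the geometric entropy of the extensions with the correct exponential factor $\big(\sum_{s\neq0}e^{-\eta|s|^p}\big)^{L}$; the remainder is a purely geometric union bound. The finite-spin case (Theorem \ref{Theorem2}) is identical, with $\sum_{s\neq 0}e^{-\eta|s|^p}$ replaced by the single-site incorrectness weight $(q-1)e^{-\eta}$.
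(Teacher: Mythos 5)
Your proposal is correct, and it reaches the conclusion by a genuinely different organization of the same two ingredients (the Peierls bound obtained by dominating the ratio of constrained to full polymer partition functions by $1$, and the subtree entropy estimate). The paper instead introduces the random variable $N_w$ = size of the contour containing $w$, bounds its exponential moment $\mu^{\omega^0}(e^{tN_w})\le L(t)$ by exactly the Peierls-plus-entropy computation you use per path, applies Markov's inequality to get $\mu^{\omega^0}(N_w\ge R-r)\le L(t_0)e^{-(R-r)t_0}$, and then union-bounds only over the $|\partial B(r,v)|$ sites of the inner sphere -- the point being that ``no cutset'' forces some inner-boundary site to lie in a contour of size at least $R-r$, so the number of terms in the union bound does not grow with $R$. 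Your route union-bounds over all $(d+1)d^{R-1}$ monotone crossing paths and compensates with a per-path probability decaying like $\epsilon_0^{L}$; this works, but note that the entropy of the geometric extensions $\gamma\supseteq P$ is not merely polynomial: using the paper's bound $\#\{\gamma\ni v_0,\ |\gamma|=k\}\le (d+1)^{2(k-1)}$ your per-path estimate really comes out as $C\bigl((d+1)^{2}\epsilon_0\bigr)^{L}$ rather than $C\,\epsilon_0^{L}$, so the final smallness condition is $d(d+1)^{2}\sum_{s\neq 0}e^{-\eta|s|^{p}}<1$ rather than $d\,\epsilon_0<1$. Since $\eta$ is taken large anyway this is harmless, and it is the only quantitative difference: the paper's exponential-Chebyshev version needs only a positive margin $t_0$ in the convergence of the contour-entropy series, independently of how many crossing paths there are, whereas your version must beat the branching factor of the tree explicitly. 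Both give the required exponential decay in $R-r$ and hence the lemma.
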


\proof  Define {$N_v$} to be the $(\N_0\cup {\{\infty\}})$-valued random variable which 
gives the size of a contour {containing $v$}. Clearly, in finite volume with boundary 
condition $\omega^0$, its value is bounded by the size of the finite volume.  
Let us now derive an exponential bound on the tail 
which is uniform in the volume, and also holds in infinite volume. 

\smallskip

To do so, look at the exponential moment generating function, which we will do 
first in finite volume $\Lambda$. Rewrite in terms of polymer partition functions 
for $t>0$ and estimate   
\begin{equation}
\begin{split}
&\mu^{\omega^0}_{\Lambda}(e^{t N_v})= \mu_{\Lambda}^{\omega^0}(N_v=0)
+\sum_{\bar\gamma:\gamma\ni v}^{\Lambda}\rho(\bar\gamma)e^{t |\gamma|}
\frac{\sum_n\sum^{\Lambda}_{\bar\gamma_1,\dots, \bar\gamma_n \sim \bar\gamma}\prod_{i=1}^n \rho(\bar\gamma_i)}
{\sum_n\sum^{\Lambda}_{\bar\gamma_1,\dots, \bar\gamma_n}\prod_{i=1}^n \rho(\bar\gamma_i)}\cr
&\leq 1 + \sum_{\bar\gamma:\gamma\ni v}^{\Lambda}\rho(\bar\gamma)e^{t |\gamma|}.\cr
\end{split}
\end{equation}
The last line follows, as all the terms inside 
the polymer partition function in the numerator are contained in the polymer partition function 
in the denominator. 
Decomposing the contour sum in the last line over contours of size $l$ and using the entropy 
estimate as above, the r.h.s. of the last display is bounded above by 
\begin{equation}
\begin{split}
1+ \sum_{l=1}^\infty
(d+1)^{2(l-1)}\Bigl( \sum_{r\in \Z \backslash 0} 
e^{-|r|^p \eta}\Bigr)^{l-1} e^{t l}=:L(t).\cr
\end{split}
\end{equation}
$L(t_0)$ is clearly finite for any choice of $t_0$ such that the geometric $l$-sum converges, i.e. s.t.
\begin{equation}
\begin{split}
\sum_{l=1}^\infty
(d+1)^{2}\sum_{r\in \Z \backslash 0} 
e^{-|r|^p \eta} e^{t_0}<1.\cr
\end{split}
\end{equation}
Assuming such a choice for $t_0$, {we deduce by the Markov inequality the uniform exponential upper bound on the size-distribution
of the contour containing $v$}:
\begin{equation}
\begin{split}
&\mu^{\omega^0}_{\Lambda}(N_v\geq N)\leq L(t_0) e^{- N t_0}
\end{split}
\end{equation}
which extends also to the infinite-volume measure. 

We may use this exponential bound on the contour size distribution to control  
the non-existence event of a cutset in an annulus. 
To see this, consider contours anchored at the boundary of the inner volume  $B(r,v)$ 
and note  
\begin{equation}
\begin{split}
&\mu^{\omega^0}(\text{there is no cutset in }B(R,v)\backslash B(r,v))\cr 
&\leq \mu^{\omega^0}(\text{there is site  }w \in \partial B(r,v):  N_w\geq R-r )\cr
&\leq |\partial B(r,v)| L(t_0) e^{- (R-r) t_0}
\end{split}
\end{equation}
This can be made smaller than $\epsilon$ by chosing $R$ large enough, 
{which proves Lemma \ref{Lem4}. \endproof }

\medskip

{\bf Correlation decay for events of polymer type.} \label{sec-correlation-decay}
We start with correlation bounds for events which can be nicely expressed in terms of contours 
and polymer partition functions. 
We say that a local event $A\in \F_{W}$ is of \textit{polymer-type with supporting set $W$}, 
if $\omega\in A$ implies that 
$\omega_v=\omega^0_v$ for all sites $v$ in the inner boundary of $W$. 
Of course not every local event is of such a type, for example the event 
$\{\omega_v=\omega^0_v+1\}$ is not of this form. 

\begin{lemma} (Decay of polymer correlations.)   \label{Lem5}
For any local events $A\in \F_{W}$ and $B\in \F_{U}$ of polymer-type
we have 
$$|\mu^{\omega^0}(A\cap B)-\mu^{\omega^0}(A) \mu^{\omega^0}(B)|\leq 
\phi\bigl(|W|,d(W,U)\bigr) $$ 
with a decay function of the form 
$\phi(|W|,r)\leq \exp(C |W|e^{- c r})-1$, for $C,c >0$. 
\end{lemma}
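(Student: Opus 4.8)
The plan is to estimate the correlation between two polymer-type events $A \in \F_W$ and $B \in \F_U$ by expressing both probabilities in terms of the convergent cluster expansion and exploiting the exponential suppression of cluster weights provided by the Bovier--Zahradn\'ik estimate \eqref{66}. Since $A$ is of polymer-type, the event forces $\omega_v = \omega^0_v$ on the inner boundary of $W$, so $\mu^{\omega^0}(A)$ can be written as a ratio of restricted polymer partition functions, where the numerator sums only over polymer configurations compatible with $A$. The key point is that such an event depends only on the polymers whose supports intersect $W$, and likewise $B$ depends only on polymers meeting $U$. First I would write $\mu^{\omega^0}(A \cap B)$, $\mu^{\omega^0}(A)$ and $\mu^{\omega^0}(B)$ all as exponentials of sums of cluster weights $w_I$, using the identity $\log Z = \sum_I w_I$ and the fact that the conditioning on a polymer-type event amounts to restricting the polymer family.

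The central mechanism is that $\log \mu^{\omega^0}(A \cap B) - \log \mu^{\omega^0}(A) - \log \mu^{\omega^0}(B)$ reduces, after cancellation, to a sum over clusters $I$ whose support simultaneously meets both $W$ and $U$ — only such clusters fail to factorize. This is the standard cluster-expansion proof of exponential decay of correlations: clusters that touch only one of the two regions cancel between numerator and denominator, leaving a remainder supported on clusters connecting $W$ to $U$. I would then bound this remainder using the convergence estimate \eqref{66}: a cluster joining $W$ and $U$ must contain polymers spanning the graph distance $d(W,U)=r$, and the geometric entropy bound (Lemma 6 of \cite{GRS12}) together with the weight suppression $e^{-\eta \sum |\omega_v - \omega^0_v|^p}$ forces each such cluster to carry a factor decaying like $e^{-cr}$. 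Summing over anchor sites in $W$ contributes a factor proportional to $|W|$, which explains the shape $C|W|e^{-cr}$ of the exponent. Finally, since $|e^x - 1| \leq e^{|x|}-1$, exponentiating the bound $\bigl|\log \tfrac{\mu^{\omega^0}(A\cap B)}{\mu^{\omega^0}(A)\mu^{\omega^0}(B)}\bigr| \leq C|W|e^{-cr}$ yields $|\mu^{\omega^0}(A\cap B) - \mu^{\omega^0}(A)\mu^{\omega^0}(B)| \leq \exp(C|W|e^{-cr})-1$, the claimed decay function $\phi(|W|,r)$.

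The main obstacle I anticipate is making the combinatorial bound on connecting clusters precise in the tree setting with unbounded spins. One must control the sum over all clusters $I$ that link $W$ and $U$, and on the tree such a cluster must follow the essentially unique geodesic path between the regions, with polymers chained along it at graph distance at most $1$ from one another. Counting these chains and summing the associated weights requires combining the geometric entropy estimate with the single-site weight sums $\sum_{s \in \Z \setminus 0} e^{-|s|^p \eta}$ that already appear in the convergence proof of Proposition \ref{Prop3}; the unboundedness of the spin space means one cannot merely count geometric supports but must also sum over height decorations, exactly as in \eqref{Ceta}. Provided $\eta$ is large enough that these height sums are small (as secured by Proposition \ref{Prop3}), each additional graph-distance step along the connecting chain contributes a factor bounded by a quantity strictly less than one, yielding the geometric decay $e^{-cr}$ in the distance. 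The anchoring of these chains at the $|W|$ boundary sites of the smaller region then produces the linear-in-$|W|$ prefactor, completing the estimate.
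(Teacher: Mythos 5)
Your proposal follows essentially the same route as the paper: the cluster-expansion representation, the cancellation that leaves only clusters connecting $W$ and $U$, the entropy-plus-weight bound via \eqref{66} giving an exponent of order $C|W|e^{-c\, d(W,U)}$, and the final $\exp(\cdot)-1$ form. The only imprecision is your claim that $\mu^{\omega^0}(A)$ for a general polymer-type event is a single exponential of cluster sums --- since such an event may \emph{require} the presence of specific contours, it is really a sum of such exponentials over admissible contour families in $W$; the paper handles this by proving the decorrelation estimate family-by-family and then summing, using that the prefactors $\mu(a_1,\dots,a_l)\mu(b_1,\dots,b_m)$ add up to at most $1$.
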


{\bf Remark.} Note that the estimate is uniform in the size of one of the volumes, 
which is here chosen to be $|U|$. 

\proof The proof of the lemma follows by expressing the events $A,B$ in question as unions over 
events formulated in terms of two finite polymer families, one  of the families 
with supports inside of $W$, the other one in  $U$ respectively. 

\smallskip

Let us write for short $\mu$ for the infinite-volume measure {$\mu^{\omega_0}$}.
We have for the probability of a single contour $a$  the expression 
$$\mu(a)=\rho(a)\exp (-\sum_{I\not\sim a}w_I).$$
The similar expression for a family of contours reads 
$$\mu(a_1, \dots, a_l)= \prod_{i=1}^l\rho(a_i)\exp (-\sum_{I: \exists i\in \{1,\dots, l\}:I\not\sim a_i}w_I).$$
This gives for the correlation between two families of contours {with disjoint supports}
\begin{equation}
\begin{split}
&\mu(a_1, \dots, a_l, b_1, \dots, b_m)- \mu(a_1, \dots, a_l)\mu( b_1, \dots, b_m)\cr
&=\prod_{i=1}^l\rho(a_i)\prod_{j=1}^l \rho(b_i)\Bigl(
\exp (-\sum_{I: \exists c\in \{a_1,\dots, a_l, b_1, \dots, b_m\}: I\not\sim c}w_I)\cr 
&-\exp (-\sum_{I: \exists i\in \{1,\dots, l\}:I\not\sim a_i}w_I)\exp (-\sum_{I: \exists j\in \{1,\dots, m\}:I\not\sim b_i}w_I)\Bigr)
\end{split}
\end{equation}
which can be rewritten as 
\begin{align}
&\mu(a_1, \dots, a_l)\mu( b_1, \dots, b_m)\Bigl(
\exp (-\sum_{I: \exists c\in \{a_1,\dots, a_l, b_1, \dots, b_m\}: I\not\sim c}w_I \nonumber\\
&\hspace{4cm}+\sum_{I: \exists i\in \{1,\dots, l\}:I\not\sim a_i}w_I 
+ \sum_{I: \exists j\in \{1,\dots, m\}:I\not\sim b_i}w_I)
-1\Bigr)\\
&{=\mu(a_1, \dots, a_l)\mu( b_1, \dots, b_m)\Bigl(
	\exp (+\sum_{I: \exists (i,j)\in \{1,\ldots,\ell\}\times\{1,\ldots,m\}: I\not\sim \{a_i,b_j\}}w_I )
	-1\Bigr)}
\end{align}
This means that only clusters $I$ are surviving 
 which connect the supporting sets $W$ and $U$, which are controlled 
 by the number of anchoring points in $|W|$ and the cluster expansion estimates. 
{By \eqref{66}, the argument of the above exponential term is indeed bounded by
\begin{align}
	&	\sum_{w\in W}\sum_{\substack{\gamma\ni w :\\ |\gamma|\geq d(W,U)}} \sum_{I\ni\gamma} w_I\\
	&\leq L(\delta)|W|\sum_{\ell=d(W,U)}^\infty 
		(d+1)^{2(\ell-1)}
		\left(\sum_{r\in\Z\backslash 0} e^{(-\eta+A)|r|^p}\right)^\ell\\
	&\leq  |W| C(\delta,d) e^{-c(d,\eta) d(W,U)}.
\end{align}
}

  This delivers the existence of the 
 decay function $\phi$ with the promised property such that 
\begin{equation}
\begin{split}
&\mu(a_1, \dots, a_l, b_1, \dots, b_m)- \mu(a_1, \dots, a_l)\mu( b_1, \dots, b_m)\cr
&\leq \mu(a_1, \dots, a_l)\mu( b_1, \dots, b_m)\phi(|W|,{d(W,U)}).
\end{split}
\end{equation}
Finally note that 
 the estimate survives the summation over all possible different families of 
 contours inside $W,U$ (which form the decomposition of the events $A,B$), 
 as the prefactors sum up at most to $1$. {Indeed, let $A=A_W\times\Omega_0^{W^c}\in\mathcal F_W$ and $B=B_U\times\Omega_0^{U^c}\in\mathcal F_U$, then
 \begin{align}
	|\mu(A\cap B)-\mu(A)\mu(B)|&=
	|\sum_{\omega_W\in A_W}\sum_{\eta_U\in B_U}\mu(\omega_W\eta_U)-\mu(\omega_W)\mu(\eta_U)|\\
	&\leq \sum_{\omega_W\in A_W}\sum_{\eta_U\in B_U} \mu(\omega_W)\mu(\eta_U)\phi(|W|,d(W,U))\\
	&\leq \phi(|W|,d(W,U)).
 \end{align}}
 { This proves Lemma \ref{Lem5}. }
 \endproof

{\bf Extremality via decorrelation of general events via cutsets.} 
We turn to the proof of {extremality of the above constructed measures $\mu^{\omega_0}$.} {By [Proposition 7.9] of \cite{HOG}}, it is equivalent to show the following

\prop{
For any fixed $A\in \F$, and cofinal volume sequence $(\Lambda_n)_{n\in \N}$ 
the decorrelation property holds
\begin{equation}
\begin{split}\label{decorr}
&\lim_{n\to\infty}\sup_{B\in \F_{\Lambda^c_n}}|\mu^{\omega^0}(A\cap B)-\mu^{\omega^0}(A)\mu^{\omega^0}(B)|=0.
\end{split}
\end{equation}

}\label{Prop4}

\rk{
Indeed, from \eqref{decorr} the tail-triviality of $\mu=\mu^{\omega^0}$ follows by taking
$A=B$ to be a tail-event. This is an allowed choice in the above  limit statement, 
which delivers the desired formula $\mu(A)=\mu(A)^2$. }

\proof
Spelling out the above general decorrelation property we are aiming at, means that for any {$A\in\mathcal F$ and $\delta>0$} there 
exists $n_0(A,\delta)\in\N$ such that for all $n\geq n_0$ we have for any $B\in \F_{\Lambda^c_n}$
\begin{equation}
\begin{split}
&|\mu(A\cap B)-\mu(A)\mu(B)|\leq \delta.
\end{split}
\end{equation}
{To see this, we apply the {\it semi{-}ring approximation theorem} twice, and condition on the presence of suitable cutsets, as follows. } 
Note first that by the semi{-}ring approximation theorem applied to the semi{-}ring 
of cylinder events, for any $\epsilon>0$ and any event $A {\in \mathcal F}$ we may 
choose a cylinder event $A_{\epsilon}$ such that $\mu(A\triangle A_{\epsilon} )\leq \epsilon$. 
See e.g. the book of Klenke \cite{Klenke14}, Theorem 1.65 (ii). 

It is now elementary that we can choose $\epsilon>0$ small enough such that for any four events for which 
$\mu(A\triangle A_{\epsilon} )\leq \epsilon$ and $\mu(B\triangle B_{\epsilon} )\leq \epsilon$
we always have 
\begin{equation}
\begin{split}
|\mu(A; B)-\mu(A_\epsilon; B_\epsilon)|\leq \delta/2
\end{split}
\end{equation}
where {$\mu(A; B):=\mu(A\cap B)-\mu(A)\mu(B)$}.
(The choice of $\epsilon=\frac{\delta}{{8}}$ will do for this\footnote{{Note that $(A\cap B)\Delta(A_\epsilon\cap B_\epsilon)\subset(A\Delta A_\epsilon)\cup(B\Delta B_\epsilon).$}}). 
Given $A \in\mathcal F$, let us fix the cylinder set $A_{\epsilon}$ which is obtained in such a way. 
It then  suffices to show that there exists 
$n_1=n_1(A_\epsilon,\delta)\in\N$ such that for all $n\geq n_1$ for any \textit{cylinders} 
$B_\epsilon\in \F_{\Lambda^c_n}$ 
we have 
\begin{equation}
\begin{split}
&|\mu(A_\epsilon; B_\epsilon)|\leq \delta/2.
\end{split}
\end{equation}
As the {approximating cylinder events} $A_{\epsilon},B_{\epsilon}$ may {\it not} be of 
polymer type as introduced above, we can not directly apply the decay estimate of Lemma \ref{Lem5}
for those events 
without further ado. We solve this problem by the introduction of cutsets, which 
occur with high probability, in the following way depicted in Figure \ref{figure}.

\medskip

\medskip
Fix a vertex $v$, and choose $r_1$ s.t. $A_{\epsilon}\in \F_{B(r_1,v)}$. 
For radii $r_1<R_1<r_2<R_2<r_3<R_3$,  
consider cutset events of the type
$$C_i:=\{\text{there is a cutset in } B(R_i,v)\backslash B(r_i,v) \} $$
for $i=1,2,3$. Let us now construct the radii. {Fix $\delta'>0$, to be chosen below.} 

\smallskip
{\bf Annulus for inner cutset.} {By Lemma \ref{Lem4},} choose $R_1>r_1$ large enough such that with probability at least $1-\delta'$ there is a cutset
in the annulus with $r_1,R_1$. 

\smallskip

{\bf Decorrelation annulus. } {Given $R_1$, by Lemma \ref{Lem5},} choose $r_2$ large enough such that 
$\phi(|B_{R_1}|, r_2-R_1)\leq \delta/4$. 

\smallskip

{\bf Annulus for middle cutset. }  Choose $R_2$ large enough such that with probability at least $1-\delta'$ there is a cutset
in the annulus with $r_2,R_2$. Choose $n$ large enough such that $\Lambda_n\supset B(R_2,v)$. 

\smallskip

{\bf Outmost cutset.  } Let $B_{\epsilon}$ be a cylinder set in $\F_{\Lambda^c_n}$. Choose $r_3$ such that 
$B_{\epsilon}\in \F_{B(r_3,v)}$. 
Choose $R_3$ large enough such that with probability at least $1-\delta'$ there is a cutset
in the annulus with respective radii $r_3$ and $R_3$.

\begin{figure}
    \begin{center}
    \includegraphics[width=12cm]{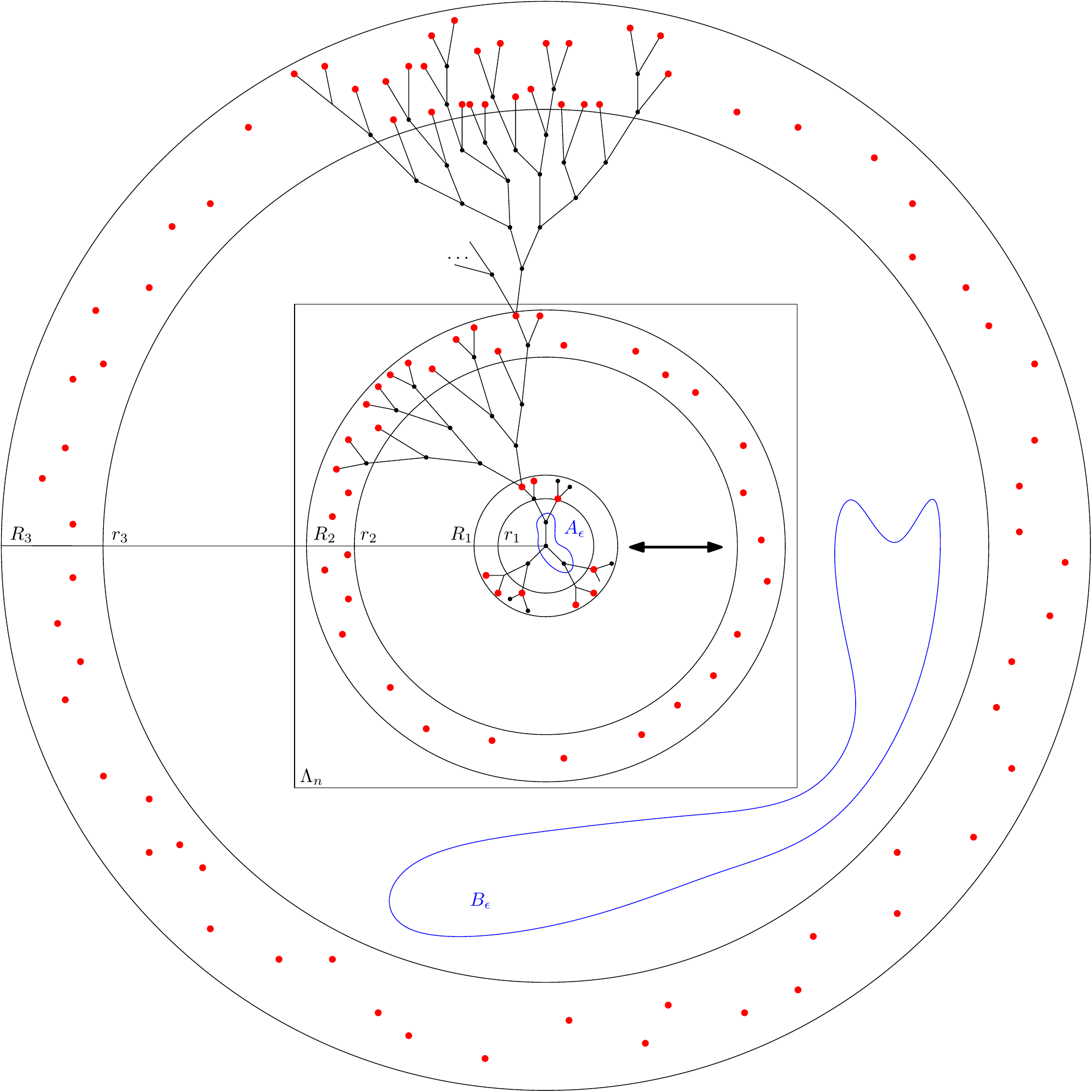}
    \caption{The picture shows the events $A_\epsilon'$ and $B_\epsilon'$ on the binary tree where $d=2$: in blue are the supports of the cylinder events $A_\epsilon$ (inside the ball of radius $r_1$) and $B_\epsilon$ (inside the ball of radius $r_3$ and outside the box $\Lambda_n$). The (larger) red dots represent  sites $v$ on the cutsets where $\sigma_v=\omega^0_v$. The decorrelation annulus (between radii $R_1$ and $r_2$) is emphasized with a bold arrow. }\label{figure}
    \end{center}
\end{figure}

Now define the following events, as depicted on Figure \ref{figure}:
$$A'_{\epsilon}:=A_{\epsilon}\cap C_1, \qquad B'_{\epsilon}:=B_{\epsilon}\cap C_2\cap C_3.$$
The advantage of these events is that they are of polymer-type with well-separated supporting sets, and by construction 
enjoy the decorrelation property
\begin{equation}
\begin{split}
&|\mu(A'_\epsilon;B'_\epsilon)|\leq \delta/4.
\end{split}
\end{equation}
Noting that $\mu(A'_{\epsilon}\triangle A_{\epsilon})\leq \delta'$ and $\mu(B'_{\epsilon}\triangle B_{\epsilon})\leq 2\delta'$ by the construction of the width of the radii for the cutsets, 
we now assume a choice of $\delta'$ has been made such that 
\begin{equation}
\begin{split}
&|\mu(A'_\epsilon; B'_\epsilon)-\mu(A_\epsilon; B_\epsilon)|\leq \delta/4.
\end{split}
\end{equation}
$\delta'=\frac{\delta}{48}$ will do for this. 
This finishes the proof of {Proposition \ref{Prop4}.}
\endproof

\subsection{Finite-spin models: {Proof of Theorem \ref{Theorem2}}}

We discuss the {corre{s}ponding} proofs for the finite-spin models {defined in \eqref{model-finite-spin}}. 
We are assuming the lower bound of the form 
\begin{equation}
\begin{split}
\label{XSSOS6}
	&\beta(H(\omega)- H(\omega^{0})) 	\geq \zeta |\gamma|
\end{split}
\end{equation}
with $\zeta>0$, which is provided by Lemma \ref{excess2} for the excess energy 
{with respect to} configurations $\omega^0$ which are elements in 
the set of stable {ground states}  $\G^0_q(d_D)$, which was defined in the statement 
of Theorem \ref{Theorem2}. 
 
The definition of {labelled} contours $\bar\gamma$ w.r.t.  {a fixed stable reference ground state}
$\omega^0$  stays the same, i.e. they 
are connected sets of incorrect points, together with the spin values 
on these sets.  
We are again using representations in terms of polymer partition functions in 
which the contours carry activities 
$$
\rho(\bar\gamma)=e^{- \beta(H_{\gamma\cup \partial \gamma}(\omega_{\gamma\cup \partial \gamma})- 
H_{\gamma\cup \partial \gamma}(\omega^{0}_{\gamma\cup \partial \gamma}))}
$$
given in terms of the excess energy. 

{We first prove a convergence criterion for the low-temperature 
expansions, which parallels Proposition \ref{Prop3}, 
but assumes only volume-suppression for contour-activities in the following form.} 

\prop{For each degree $d\geq 2$ and 
$q\in (2,\infty) $ there is a finite constant $\zeta_0(d,p)$ such that 
for all $\zeta \geq \zeta_0(d,p)$ 
the cluster expansion converges for all complex polymer activities in the polydisk 
\begin{equation}\label{invablabla}
\begin{split}|\rho(\bar \gamma)|\leq
\exp\left(-\zeta |\gamma|\right) 
\end{split}
\end{equation}
for all $\bar \gamma$.  
}\label{Prop5}

{{\bf Remark. } Note that the r.h.s. depends only 
on the volume $|\gamma|$ of the labelled contour $\bar\gamma$, as opposed to the configuration-dependent assumption 
in Proposition \ref{Prop3}. 
Such uniformity in the spin configuration on $\gamma$ is only possible for finite-spin models. } 

\proof
{We choose the generalized volume function $b$ for labelled contours 
in the application of Theorem 1 of Bovier-Zahradn\'ik (\cite{BZ00}, Theorem 1, page 768), 
only depending on the volume of the contour in the form 
\begin{equation}
\begin{split}
\label{XSSOS7}
	&b(\bar\gamma)=B |\gamma|
\end{split}
\end{equation}
where the prefactor $B>0$ can be chosen to our convenience, see below.  }

Rerunning the convergence proof for the partition function 
as before, we see the following. 
The first condition \eqref{Condi1} is satisfied for $e^{B}\leq \delta e^{\zeta}$, by the assumption \eqref{invablabla}.
The second condition  \eqref{Condi2} is now implied if we have 
\begin{equation}
\begin{split}\label{qappearance}
(d+2)\sum_{l=1}^\infty e^{-l (\zeta - B)}(q-1)^l (d+1)^{2 (l-1)}\leq \frac{B}{L(\delta)}.
\end{split}
\end{equation}
Note that the number of choices of spin-values {per site} on the contour 
support is $q-1$, which is responsible for its appearance on the l.h.s. of \eqref{qappearance} above. 
We may finally choose $B=1, \delta=\frac{1}{2}$ to see that we satisfy both 
conditions by choosing $\zeta$ large enough. This proves 
Proposition \ref{Prop5}. \endproof 
\smallskip
{Having seen this, 
the remaining parts of the proof of the main theorem all  carry over from the {$p$-}SOS analogues. 
This including convergence via Fourier transform (while tightness is automatic),  
polymer decorrelation, and finally the extremality of the measures $\mu^{\omega^0}$ via
the general correlation decay of Proposition \ref{Prop4}, which holds 
by means of the Lemma \ref{Lem4} on cutsets.   
We finally remark that from (\ref{qappearance}) follows
that for the $q$-state Potts model we have a bound on the 
minimal inverse temperature $\beta$ 
for which all low-temperature states exist, 
on the order of $\log (q-1)$.   } 	

This provides the proof of Theorem \ref{Theorem2}. \endproof 

{\bf Remark.} We are not after optimality of the degree $d$ of the tree for which 
our states exist. 
It may be possible to extend the construction in the $p$-SOS and finite-spin cases 
to include also trees of low degree, even the binary tree, 
by demanding the distance between broken bonds in the ground state to be large. 
This has been outlined for the particular case of {foliated states} for the Ising model on binary trees in \cite{GRS15}, 
whether it can be done for our models we leave for future work.

%

\smallskip

\section{Applications to inhomogeneous systems with local disorder terms}
\label{Section5}

\subsection{Models}
Let us consider our previously discussed 
$\Z_q$-valued {or} $\Z$-valued models with homogeneous pair interactions $\Phi$, {defined in \eqref{model-p-sos} and \eqref{model-finite-spin}}, but under 
the additional influence in the interaction of single-site terms $\Psi_v:\Omega_0\rightarrow \R$ at the sites $v\in V$. So the Hamiltonian 
now takes the following form\begin{equation}
\begin{split}\label{additionalsinglesite}
\sum_{v\sim w}\Phi(\omega_v,\omega_w)+\sum_{v}\Psi_{v}(\omega_v).
\end{split}
\end{equation}
It is in general spatially inhomogeneous, but the case $\Psi_v=\Psi$ of 
a homogeneous local potential is not excluded, and already of interest.   
Let us highlight some prototypical special cases. 



\subsubsection*{\bf $\Z_q$-valued Potts model in quenched random potentials, random field 
Ising model}

As before we write the pair potential of the Potts model in the form $\Phi(\omega_v,\omega_w)=1_{\omega_v\neq \omega_w}$. 
Moreover, the single-site term $\Psi_{v}(\cdot)$ is a quenched 
random potential on $\Z_q$, which is usually assumed 
to be i.i.d. over the sites $v\in V$, according to an external probability distribution $\P$, 
and studied w.r.t. to its $\P$-a.s. properties.
The case of a deterministic single-site interaction where 
$\Psi_{v}(\cdot)=\Psi(\cdot)$ is allowed and models the Potts model in a homogeneous 
vector-valued field.

The subcase $q=2$ is identical to the random field Ising model (RFIM) for spins $\omega_v\in\{-1,1\}$, and quenched random fields $\eta_v\in \R$, 
with Hamiltonian 
$$
-\sum_{v\sim w}\omega_v \omega_w - \sum_{v}\eta_v \omega_v 
$$
which was already considered 
on the tree by Bleher et al. \cite{BleherRFIM}. 
These authors proved in particular that for the RFIM in the low temperature (large $\beta$) regime,
there is a strictly positive maximal strength $\eta^*(\beta)>0$ such that for all random field configurations 
$\eta=(\eta_v)_{v\in V}$ with $\sup_{v\in V}|\eta_v|\leq 
\eta^*(\beta)$ there are at least two different $\eta$-dependent Gibbs measures 
$\mu^+[\eta], \mu^-[\eta]$. These infinite-volume measures 
are obtained as weak limits of the finite-volume measures 
with all-plus (all-minus) boundary conditions.   
In their result the actual 
distribution under $\P$ plays no role. \\
We are aiming in this section at a broad generalization of this statement to extremal measures 
$\mu^{\omega^0}[\eta]$ constructed with non-homogeneous spin-boundary conditions 
$\omega^0$, and the more general model classes under discussion here.

\subsubsection*{\bf $\Z$-valued random field random surface models: random 
field $p$-SOS model}
In this variation of the $p$-SOS model the Hamiltonian takes the form 
\begin{equation}
\begin{split}\label{randomrandomsurface}
\sum_{v\sim w}|\omega_v-\omega_w|^p+\sum_{v}\eta_v \omega_v
\end{split}
\end{equation}
with quenched random fields $\eta_v\in \R$ and spins $\omega_v\in \Z$. 
Note that the local disorder term
of random-field type adds an unbounded perturbation to the Hamiltonian, even 
for uniformly bounded random fields $\eta_v$.
The model is well-defined for $p\geq 1$, while the 
case $p<1$ would lead to infinite partition functions for non-zero external fields.  
It has been recently studied in detail 
on the lattice for $p=2$ in 
\cite{DHP21} with a focus on the case where $(\eta_v)_{v\in V}$ are symmetrically distributed 
i.i.d. quenched random variables which have mean zero 
and finite variance. The authors obtained in their work 
upper and lower bounds on disorder-averages of the gradient fluctuations 
$|\omega_v-\omega_w|^2$ 
w.r.t. to the finite-volume zero-boundary condition Gibbs measure $\mu^0_{\Lambda}[\eta]$, 
valid for all finite boxes $\Lambda$. They provide boundedness of gradient 
fluctuations uniformly in the box-size in $d\geq 3$, and roughening of local fluctuations 
in $d\leq 2$.

Continuous-spin versions of the model with spin values $\omega_v\in \R$, also 
thereby allowing more general pair-interactions, were studied in \cite{DHP21,CK12}. 
We point out that the model \eqref{randomrandomsurface}
is of gradient-type, due to the multiplicative nature of the local terms, 
which opens the field for the study of {\it Gradient Gibbs measures } in the infinite volume 
which are defined on configurations of heights modulo a joint height shift, with state-space $\Z^V/\Z$.

\subsubsection*{\bf $\Z$-valued p-SOS models in random media}

Here one keeps the gradient interaction, but allows more general local interactions, so that the Hamiltonian 
becomes 
\begin{equation}
\begin{split}\label{SOSrandommedia}
\sum_{v\sim w}|\omega_v-\omega_w|^p+\sum_{v}\zeta_v(\omega_v)
\end{split}
\end{equation}
where $(\zeta_v(k))_{v\in V, k\in \Z}$ are real numbers. 
This model has been studied on the lattice $\Z^d$ for $p=1$ in \cite{BK94} 
under the assumption that $\eta=(\zeta_v(k))_{v\in V, k\in \Z}$ 
is a process of i.i.d. random variables.  
As main result the existence of infinite-volume Gibbs measures $\mu^0[\eta]$ obtained with 
zero-boundary conditions was shown for small disorder, large inverse temperature 
in lattice dimensions $d\geq 3$. The proof was based on a rigorous  
renormalization group analysis using multiscale cluster expansions.

Let us consider the above perturbed 
models on the regular tree and present 
two stability theorems. 

\subsection{Stability Theorems}

\begin{thm}\label{Theorem6} Let $p\geq 1$ 
and consider the 
$p$-SOS models (\ref{model-p-sos}) under the assumptions of Theorem \ref{Theorem1}
formulated for the model without disorder. 

Consider now the model 
in the additional presence of quenched random fields $\eta=(\eta_v)_{v\in V}$ with Hamiltonian \eqref{randomrandomsurface}. 
Then there is a strictly positive threshold $\delta^*>0$ such that for each $\eta$ satisfying
\begin{equation}
\begin{split}\label{randomfieldfinitespin2}
\sup_{v\in V}|\eta_v|\leq \delta^*
\end{split}
\end{equation}
at $\beta$ sufficiently large, 
there is an identifiable class of extremal  
Gibbs measures $\mu^{\omega^0}_\beta[\eta]$, concentrated on the stable ground states $\omega^0$, 
as described in Theorem \ref{Theorem1}. 
 \end{thm}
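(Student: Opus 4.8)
The plan is to reduce Theorem \ref{Theorem6} to the disorder-free Theorem \ref{Theorem1} by showing that the excess-energy estimate of Lemma \ref{Lem1} is robust under the addition of the small single-site field term, and then observing that every subsequent step of the construction used that estimate only through the invariant upper bound on the contour activities. Fix a stable ground state $\omega^0\in\G^0$ of the field-free $p$-SOS model, and for a contour $\bar\gamma=(\gamma,\omega_\gamma)$ relative to $\omega^0$ write $s_v=\omega_v-\omega^0_v$, so that $s_v\neq 0$ for $v\in\gamma$ and $s_v=0$ otherwise. Since the disorder term is linear in the configuration, the excess energy of the perturbed Hamiltonian \eqref{randomrandomsurface} splits as
\begin{equation}
H^\eta(\omega)-H^\eta(\omega^0)=\bigl(H(\omega)-H(\omega^0)\bigr)+\sum_{v\in\gamma}\eta_v s_v,
\end{equation}
where $H$ is the field-free SOS Hamiltonian and the field sum is over $\gamma$ only because $s_v=0$ off the support.

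The first term is bounded below, for $d$ large, by $c\sum_{v\in\gamma}|s_v|^p$ with $c>0$ through Corollary \ref{cor} (itself based on Lemma \ref{Lem1}). For the field term I would use $\bigl\vert\sum_{v\in\gamma}\eta_v s_v\bigr\vert\leq \delta^*\sum_{v\in\gamma}|s_v|$, and here the hypothesis $p\geq 1$ enters crucially: the spins are integer-valued, so $|s_v|\geq 1$ on $\gamma$, whence $|s_v|\leq |s_v|^p$ for every $v\in\gamma$. Combining the two bounds gives
\begin{equation}
H^\eta(\omega)-H^\eta(\omega^0)\geq (c-\delta^*)\sum_{v\in\gamma}|s_v|^p,
\end{equation}
and choosing $\delta^*<c$ produces a strictly positive stability constant $c'=c-\delta^*$ for the perturbed model, uniformly over all fields with $\sup_v|\eta_v|\leq\delta^*$. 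This is the only point at which the unbounded linear perturbation meets the nonlinearity of the interaction, and it is exactly the step that would break down for $p<1$, consistently with the partition function then being infinite.

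With the excess energy restored in the form $\beta\bigl(H^\eta(\omega)-H^\eta(\omega^0)\bigr)\geq \beta c'\sum_{v\in\gamma}|s_v|^p$, which is precisely the hypothesis feeding Proposition \ref{Prop3}, the remainder carries over verbatim. The activities $\rho(\bar\gamma)$ are now site-dependent through $\eta$ and hence not tree-automorphism invariant, but, exactly as already exploited in the proof of Proposition \ref{Prop3}, only the invariant bound $|\rho(\bar\gamma)|\leq \exp\bigl(-\beta c'\sum_{v\in\gamma}|s_v|^p\bigr)$ is ever used, and this is preserved. Taking $\beta$ large enough that $\beta c'$ exceeds the threshold $\eta_0(d,p)$ of Proposition \ref{Prop3} gives convergence of the cluster expansion; Lemma \ref{Lem3} together with the tightness estimate of Section \ref{sec-tightness} yields finite-dimensional convergence to an infinite-volume measure $\mu^{\omega^0}_\beta[\eta]$; the DLR and identifiability arguments of Section \ref{sec-DLR} apply unchanged to make the measures distinct for distinct $\omega^0$; and extremality follows from Proposition \ref{Prop4}, whose proof invokes only the cutset Lemma \ref{Lem4} and the polymer-decorrelation Lemma \ref{Lem5}, both resting solely on the invariant activity bound.

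The main obstacle is thus the excess-energy stability step alone: one must verify that the perturbation $\sum_v\eta_v\omega_v$, which is unbounded even for uniformly bounded fields, can be absorbed into the interaction. The comparison $|s_v|\leq|s_v|^p$ on nonzero integer deviations settles this for $p\geq 1$, after which no genuinely new estimate is needed and the proof amounts to propagating the replacement of the stability constant $c$ by $c-\delta^*$ through the machinery of Section \ref{sec-properties}.
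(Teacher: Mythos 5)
Your proposal is correct and follows essentially the same route as the paper: the paper's Lemma \ref{Lem5.1} establishes exactly the stability reduction $c\mapsto c-\delta$ via the bound $\sum_v(c|s_v|^p-\delta|s_v|)\geq\sum_v(c-\delta)|s_v|^p$ (valid for integer deviations precisely when $p\geq 1$), and then declares the rest of Theorem \ref{Theorem6} "straightforward" because, as you observe, the entire machinery of Section \ref{sec-properties} only ever uses the invariant upper bound on the contour activities. Your write-up in fact makes the propagation step more explicit than the paper does.
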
 

{\bf Remark:}  Note that the case of a small homogeneous non-zero field is encluded. 
Note also that $\delta^*$ provided by the theorem depends on the parameters 
$d,{d_{{\max}}},M$ describing the sparsity and uniform bounds on 
the increments of the ground states $\omega^0$, which were discussed before. 

We turn to a corresponding stability result in the remaining classes of 
disordered models described above. 

\begin{thm}\label{Theorem7} 
Consider the $\Z$-valued $p$-SOS models (\ref{model-p-sos}) for $p>0$ 
under the assumptions of Theorem \ref{Theorem1}
formulated for the model without disorder, but in the presence of additional local perturbations 
of the form (\eqref{additionalsinglesite}). 

Alternatively consider the 
$\Z_q$-valued models under the assumptions of Theorem \ref{Theorem2} formulated 
for the model without disorder, but in the presence of additional local perturbations 
of the form \ref{randomrandomsurface}

Then, for both types of models, 
there is a strictly positive threshold $\epsilon^*>0$ such that for each choice 
of local potentials $\eta=(\Psi_{v}(\cdot))_{v \in V}$ satisfying  
\begin{equation}
\begin{split}\label{randomfieldfinitespin}
\sup_{v\in V}\sup_{
k,l\in \Omega_0}|\Psi_{v}(k)-\Psi_{v}(l)|\leq \epsilon^*
\end{split}
\end{equation}
there is an identifiable class of extremal  
Gibbs measures $\mu^{\omega^0}[\eta]$, concentrated on the stable ground states $\omega^0$, 
as described in Theorem \ref{Theorem1} and Theorem  \ref{Theorem2} respectively. 
\end{thm}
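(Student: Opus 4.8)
The plan is to reduce everything to the excess-energy estimates of Lemma \ref{Lem1} and Lemma \ref{excess2}, since the entire machinery behind Theorem \ref{Theorem1} and Theorem \ref{Theorem2} — convergence of the cluster expansion (Proposition \ref{Prop3}, Proposition \ref{Prop5}), finite-dimensional convergence and tightness (Lemma \ref{Lem3}), the DLR property, identifiability of the states, and extremality via cutsets and decorrelation (Lemma \ref{Lem4}, Lemma \ref{Lem5}, Proposition \ref{Prop4}) — uses the reference configuration $\omega^0$ only through a contour excess-energy lower bound of the form $c\sum_{v\in\gamma}|\omega_v-\omega^0_v|^p$ (respectively $c|\gamma|$) with a strictly positive stability constant $c$. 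Hence it suffices to show that the perturbed Hamiltonian \eqref{additionalsinglesite} retains an excess-energy bound of exactly this shape, with a positive constant, once the oscillation threshold $\epsilon^*$ of \eqref{randomfieldfinitespin} is small enough; all subsequent estimates then transcribe verbatim, and uniformly in the disorder realization $\eta=(\Psi_v)_{v\in V}$.

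First I would isolate the contribution of the local terms. For a contour $\bar\gamma=(\gamma,\omega_\gamma)$ relative to $\omega^0$, with $\omega=\omega_\gamma\omega^0_{\gamma^c}$, the two configurations agree off $\gamma$, so the single-site part of the excess energy telescopes to $\sum_{v\in\gamma}\bigl(\Psi_v(\omega_v)-\Psi_v(\omega^0_v)\bigr)$. By \eqref{randomfieldfinitespin} each summand has modulus at most $\epsilon^*$, so, writing $H$ for the unperturbed Hamiltonian and $H^{\Psi}$ for the perturbed one,
\[ H^{\Psi}(\omega)-H^{\Psi}(\omega^0) \;\geq\; \bigl(H(\omega)-H(\omega^0)\bigr) - \epsilon^*\,|\gamma|. \]
This is the only place the perturbation enters the analysis.

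For the finite-spin models this already completes the estimate: Lemma \ref{excess2} gives $H(\omega)-H(\omega^0)\geq c|\gamma|$ with $c=(d-1)u-d_{\max}(U+u)>0$ under \eqref{sparsity-qspin}, so the perturbed excess energy is at least $(c-\epsilon^*)|\gamma|$, still positive once $\epsilon^*<c$. (On a finite alphabet every $\Psi_v$ automatically has finite oscillation, so \eqref{randomfieldfinitespin} is merely a uniform smallness requirement.) For the $p$-SOS models the extra ingredient is integrality: for $v\in\gamma$ the integers $\omega_v$ and $\omega^0_v$ differ, hence $|\omega_v-\omega^0_v|\geq 1$ and $|\omega_v-\omega^0_v|^p\geq 1$ for every $p>0$, giving the domination
\[ \epsilon^*\,|\gamma| \;=\; \epsilon^*\sum_{v\in\gamma}1 \;\leq\; \epsilon^*\sum_{v\in\gamma}|\omega_v-\omega^0_v|^p. \]
Combining this with the bound $H(\omega)-H(\omega^0)\geq c\sum_{v\in\gamma}|\omega_v-\omega^0_v|^p$ from Lemma \ref{Lem1} and Corollary \ref{cor} (or from the mixed criterion \eqref{generalsparsity}) yields
\[ H^{\Psi}(\omega)-H^{\Psi}(\omega^0) \;\geq\; (c-\epsilon^*)\sum_{v\in\gamma}|\omega_v-\omega^0_v|^p, \]
so choosing $\epsilon^*<c$ leaves a strictly positive stability constant $c-\epsilon^*$, independent of the contour and of $\eta$.

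With the perturbed excess-energy bound in hand I would take $\beta$ large enough that $\beta(c-\epsilon^*)$ exceeds the convergence threshold of Proposition \ref{Prop3} (respectively Proposition \ref{Prop5}), so that those propositions apply to the activities $\rho(\bar\gamma)$ built from $H^{\Psi}$: the polydisk bounds \eqref{invab}/\eqref{invablabla} hold with the effective parameter $\beta(c-\epsilon^*)$ in place of $\eta$, and the remaining arguments of Section \ref{sec-properties} — Fourier-transform convergence, tightness, the DLR equation, identifiability, and the cutset/decorrelation route to extremality — carry over unchanged and uniformly in $\eta$. The main subtlety, deserving the most care, is precisely the $p$-SOS integrality step: it is what lets a \emph{bounded-oscillation} perturbation be absorbed for every $p>0$, in contrast to the genuinely unbounded linear random field of Theorem \ref{Theorem6}, whose per-site cost grows like $|\omega_v-\omega^0_v|$ and can therefore only be controlled by $|\omega_v-\omega^0_v|^p$ when $p\geq 1$.
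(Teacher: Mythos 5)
Your proposal is correct and follows essentially the same route as the paper: the paper's proof reduces Theorem \ref{Theorem7} to Lemma \ref{Lem5.1}, whose first bullet is exactly your telescoped single-site bound $\epsilon^*|\gamma|\leq\epsilon^*\sum_{v\in\gamma}|\omega_v-\omega^0_v|^p$ (using $|\omega_v-\omega^0_v|\geq 1$ on $\gamma$) yielding the reduced stability constant $c-\epsilon^*$, after which the cluster-expansion, cutset and decorrelation machinery is invoked unchanged. You also correctly identify why the integrality step lets bounded-oscillation perturbations work for all $p>0$ while the linear random field of Theorem \ref{Theorem6} forces $p\geq 1$, which matches the paper's own remark.
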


\subsection{Proofs via stability of excess energy estimates} 

To prove the last two stability theorems it turns out that we can
use exactly the same contour definitions and 
characterizations of ground states \textit{in spe}, as for the unperturbed models. However, in doing 
this we need to ensure that the expansions around the ground states 
$\omega^0$ which were identified in the homogeneous models,  
and their consequences, also stay valid for the $\eta$-perturbed models. 
This leads us to the study of excess energies in the perturbed models. 

Recall for this purpose the definition of stability of a ground state $\omega^0$  
with a constant $c>0$ given above in 
the two cases of finite or infinite local state space. 
We then have the following 
lemma on the stability in the models perturbed by a collection of 
local potentials $\eta=(\Psi_v)_{v\in V}$.

\lemma{Consider $\Z$-valued or $\Z_q$-valued models with additional local 
potentials of the type \eqref{additionalsinglesite}. \label{Lem5.1}
 Assume that $\omega_0\in \Omega$ is a ground state 
which is stable with a constant $c>0$ for the model with $\eta=0$. 
\begin{itemize}
\item If the single-site potentials obey the smallness condition 
\begin{equation}
\begin{split}\label{randomfieldfinitespin}
\sup_{v\in V}\sup_{
k,l\in \Omega_0}|\Psi_{v}(k)-\Psi_{v}(l)|=:\epsilon <c
\end{split}
\end{equation}
then $\omega_0\in \Omega$ is a {stable ground state with reduced constant $c-\epsilon>0$. }
 \item Consider specifically the random field random surface model \eqref{randomrandomsurface}
with $p\geq 1$. If
\begin{equation}
\begin{split}\label{randomfieldfinitespin2}
\sup_{v\in V}|\eta_v|=:\delta<c
\end{split}
\end{equation}
then $\omega_0\in \Omega$ is a {stable ground state 
 with reduced constant $c-\delta>0$}. 
  \end{itemize}

\proof The statements follow from spelling out the definitions in terms of the excess energies, 
and the triangle inequality. Note for the second case, that the excess energy
of $\omega$ relative to $\omega^0$ in the model \eqref{randomrandomsurface} has the lower 
bound $\sum_{v}(c|\omega_v-\omega_v^0|^p-\epsilon |\omega_v-\omega_v^0|)\geq 
\sum_{v}(c-\epsilon)|\omega_v-\omega_v^0|^p
$. The estimate works iff $p\geq 1$, which had 
to be already assumed before to have a well-defined model.  This  
explains the restriction to the case of convex interactions in the $p$-SOS models with random field disorder in the 
formulation of the lemma and of Theorem \ref{Theorem6}.  \endproof

Given  Lemma \ref{Lem5.1} the proof of Theorem \ref{Theorem6} and Theorem \ref{Theorem7}
is straightforward. 
\medskip

\end{document}